\def\hpq0{h^{p,q}_{\leq 0}}
\def\Hpq0{\H_{\leq 0}^{p,q}}
\def\dbar{\bar\partial}
\def\ddbar{\partial\dbar}
\def\C{{\mathbb C}}
\def\X{{\mathcal X}}
\def\D{D_{GM}}
\def\dof{\dot{\varphi_t}}
\def\K{{\mathcal K}}
\def\H{{\mathcal H}}
\def\u{{\mathbf u}}
\def\v{{\mathbf v}}
\def\L{{\mathcal L}}
\def\Re{{\rm Re\,  }}
\def\L{{\mathcal L}}
\def\be{\begin{equation}}
\def\ee{\end{equation}}
\newtheorem{thm}{Theorem}[section]
\newtheorem{lma}[thm]{Lemma}
\newtheorem{prop}[thm]{Proposition}
\theoremstyle{definition}
\theoremstyle{remark}
\newtheorem{preremark}{Remark}
\newtheorem{preex}{Example}
\numberwithin{equation}{section}
\begin{document}

\title[]
{Strict and non strict positivity of direct image bundles.}

\address{B Berndtsson :Department of Mathematics\\Chalmers University
  of Technology \\
  and Department of Mathematics\\University of G\"oteborg\\S-412 96 G\"OTEBORG\\SWEDEN,\\}

\email{ bob@math.chalmers.se}

\author[]{ Bo Berndtsson}

\begin{abstract}
{This paper is a sequel to \cite{Berndtsson}. In that paper we studied
  the vector bundle associated to the direct image of the relative
  canonical bundle of a smooth K\"ahler morphism, twisted with a
  semipositive line bundle. We proved that the curvature of a such
  vector bundles is always semipositive (in the sense of Nakano). Here
  we adress the question if the curvature is strictly positive when
  the Kodaira-Spencer class does not vanish. We prove that this is so
  provided the twisting line bundle is stricty positive along fibers,
  but not in general. \\ AMS 2010
classification: 32G05} 
\end{abstract}

\bigskip

\maketitle

\section{Introduction}

Let $p:\X\rightarrow Y$ be a smooth proper holomorphic fibration of
complex manifolds of relative dimension $n$, and let $\L\rightarrow
\X$ be a holomorphic line bundle 
equipped with a smooth metric of semipositive curvature. The direct
image sheaf of the relative canonical bundle twisted with $\L$, 
$$
p_*(\L+ K_{\X/Y}),
$$
is then associated to a vector bundle, $E$, over $Y$ with
fibers
$$
E_y=H^0(\X_y, K_{\X_y}+\L|_{\X_y}),
$$
see e g \cite{Berndtsson}. In \cite{Berndtsson} we have shown that if
$\X$ is Kähler, the natural $L^2$-metric on $E$ has nonnegative
curvature in the sense of Nakano. In this paper we shall, in two
special cases,  discuss more
explicit formula for the curvature, which enables us to determine when
the curvature is strictly positive. We will all the time consider only
the case of a one dimensional base, but the computations generalize
 to the case of a base of higher dimension. For the moment however,
 these higher dimensional computations seem to give little more with
 regard to Nakanopositivity than what is already contained in
 \cite{Berndtsson}, so we omit them here . (The curvature in the sense
 of Griffiths can be 
 obtained from 
 the case of one dimensional base.)

We concentrate on two particular cases that are somewhat opposite. The
first case is when $\L$ is trivial, so that we are dealing with the
direct image of the relative canonical bundle itself. The second case
is when $\L$ is strictly positive on all fibers.

In the first  case  the
semipositivity theorem is already contained in the work of Griffiths,
see \cite{Griffiths}, p 34, and further developed by Fujita, \cite{Fujita}. 

When the fibration is trivial it is clear that the curvature 
vanishes, so one expects that the positivity of $E$ will depend on how
far from being trivial the fibration is. This is measured by the
Kodaira-Spencer class. We recall its definition in the next section;
for the moment it is enough to remember that, at a point $t$ in the
base,  it is given by an element
in $H^{0,1}(\X_t, T^{1, 0}(\X_t))$, which vanishes if the fibration is
trivial (to first order) at $t$. Let us denote this class by $\K_t$;
it is represented by  $\dbar$-closed $(0,1)$-forms with values in the
holomorphic tangent bundle of the fiber. 

An element in $E_t$ is a holomorphic $(n,0)$-form, $u$, on $X_t$. The
Kodaira-Spencer class acts on $u$ in a natural way: If $k_t$ is a
vectorvalued  $(0,1)$ form in $\K_t$, which locally decomposes as
$$
k_t=w\otimes v,
$$
where $w$ is scalar valued and $v$ is a vector field, then first we let
the vector field part of $k_t$ act on $u$ by contraction
$$
k_t.u:= w\wedge\delta_v u.
$$
This gives a globally defined $\dbar$-closed form of bidegree $(n-1,1)$ and
$$
\K_t.u:= [k_t.u],
$$
an element in $H^{n-1, 1}(X_t)$. The following theorem is due to
Griffiths, see \cite{Griffiths} and further references there, but we
shall also discuss how it follows from the formalism in
\cite{Berndtsson} in sections 3 and 4 of this paper. 
\begin{thm}
Let $\Theta^E$ be the curvature of $E$ with the natural
$L^2$-metric. Then
\be
\langle \Theta^E u, u\rangle=\|\K_t.u\|^2.
\ee
The right hand side is the norm of the class $\K_t.u$ with respect to
the given Kähler metric, i e the norm of its unique harmonic
representative. It does not depend on the choice of Kähler metric.
\end{thm}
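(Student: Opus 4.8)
The plan is to compute the $L^2$-curvature of $E$ directly by differentiating the metric twice along a local holomorphic frame, then identify the resulting expression with $\|\K_t.u\|^2$. Let me think through how I'd actually set this up.

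First I would set up local coordinates and a local holomorphic section. Over a small disk in the base $Y$, pick a holomorphic section $u_t$ of $E$, i.e. a family of holomorphic $(n,0)$-forms $u_t$ on the fibers $\X_t$ varying holomorphically in $t$. The $L^2$-metric is $\|u_t\|^2 = c_n \int_{\X_t} u_t \wedge \bar u_t$ (with the appropriate sign constant $c_n = i^{n^2}$ making this positive). To differentiate this integral over a varying fiber I need a way to lift the form $u_t$ to a form on the total space. Since the problem is local over the base, I would choose a local holomorphic trivialization of the fibration smoothly (not holomorphically), and represent $u_t$ as the restriction to the fiber of an ambient $(n,0)$-form $U$ on $p^{-1}(\text{disk})$; the ambiguity in this lift is exactly what the Kodaira-Spencer class measures.

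The key steps, in order: (1) Express the curvature as $\langle \Theta^E u,u\rangle = -\partial_t\partial_{\bar t}\log\|u_t\|^2$ evaluated where $u$ is chosen so that the first derivative of the metric vanishes (normalizing the frame), which reduces the curvature to a second-derivative computation. (2) Carry the $t$ and $\bar t$ derivatives inside the fiber integral using the chosen lift; this produces terms involving the Lie derivative of $U$ along the lifting vector field $V$ of $\partial_t$. The obstruction to the lift being holomorphic is precisely $\dbar V$, whose class in $H^{0,1}(\X_t,T^{1,0})$ is the Kodaira-Spencer class $\K_t$. (3) Show that after integration by parts and using holomorphicity of $u_t$ on each fiber, the mixed second derivative reduces to the $L^2$-norm of the $(n-1,1)$-form $\dbar V \,\lrcorner\, u = k_t.u$, and that replacing $k_t.u$ by its harmonic representative only decreases (and in fact gives exactly) the norm because the curvature is intrinsic. (4) Finally observe that $k_t.u$ is $\dbar$-closed on the fiber, so its $L^2$-norm modulo exact forms equals $\|\K_t.u\|^2$, the harmonic norm, and appeal to Hodge theory to conclude independence of the K\"ahler metric.

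The main obstacle is step (2)–(3): controlling the terms that arise from the noncanonical lift and showing that the genuinely metric-dependent (non-harmonic) pieces drop out. In differentiating twice one picks up not only the Kodaira-Spencer contribution but also terms from the second-order variation of the lift and from the first variation of the volume form; the delicate point is to integrate by parts on the fiber and use $\dbar u_t = 0$ together with $\partial u_t = 0$ (type reasons, since $u_t$ is an $(n,0)$-form on an $n$-dimensional fiber) to cancel everything except the norm of the harmonic part of $k_t.u$. I expect the cleanest route is to choose the lift $V$ of $\partial_t$ so that the representative $k_t.u = \dbar V \lrcorner\, u$ is \emph{harmonic} on the central fiber, which kills the cross terms and exhibits the right-hand side directly; the metric-independence then follows formally because the left-hand side $\langle\Theta^E u,u\rangle$ is manifestly independent of all auxiliary choices.
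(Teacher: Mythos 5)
Your overall strategy---second variation of the $L^2$-metric computed through a lift of $u$ to the total space (the paper's ``representative'' $\u$), identification of $\dbar V$ restricted to the fiber with the Kodaira--Spencer class, and a special choice of lift to exhibit the harmonic norm---is exactly the paper's route, so the architecture is right. But there is a genuine gap at your steps (2)--(3), and the fix you propose does not close it. When one differentiates twice, the term quadratic in $\eta$ (the $(n-1,1)$-component of $\dbar\u$, which by the paper's Lemma 2.2 represents $-\K_t.u$) enters as the \emph{indefinite} pairing $-c_n\int_{X_t}\eta\wedge\bar\eta$, not as an $L^2$-norm: by the Hodge--Riemann bilinear relations this equals $\|\eta_p\|^2-\|\eta_\perp\|^2$, positive on the primitive part of $\eta$ and negative on its complement. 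In addition, the $(1,0)$-derivative produces a second negative term $-\|P_{\perp}\mu\|^2$, where $\partial\u=dt\wedge\mu$ and $P_{\perp}$ projects onto the orthogonal complement of holomorphic $(n,0)$-forms; this term survives even at a point where the frame is normalized ($D'u=0$ only kills the holomorphic projection $P(\mu)$, not $\mu_\perp$), and it does not cancel by integrating by parts against $\dbar u_t=0$. So ``everything cancels except the harmonic norm'' is precisely the assertion that has to be proved, and it amounts to killing two independent defects with a single choice of lift.

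Your suggested remedy---choose $V$ so that $k_t.u$ is harmonic---addresses at most the $\eta$-defect, and even that only after one knows the class $\K_t.u$ is \emph{primitive} (otherwise harmonicity does not convert the indefinite pairing into $+\|\eta\|^2$); primitivity of the class itself requires an argument. The missing ingredient is the paper's Lemma 2.1: changing the representative by $dt\wedge v$ replaces $\eta$ by $\eta-\dbar v$ and $\mu$ by $\mu-\partial v$, and setting $\alpha=v\wedge\omega$ turns the two requirements into a $\dbar$-equation, $\dbar\alpha=\eta\wedge\omega$ (solvable because $\u\wedge\omega=dt\wedge u'$ forces $\eta\wedge\omega=-\dbar u'$ on fibers---this is also where primitivity of the class comes from), and a $\dbar^*$-equation, $\dbar^*\alpha=\mu_\perp$; these are solved simultaneously by $\alpha=\alpha_1+\alpha_2$ with $\alpha_1\perp\Ker\dbar$ and $\alpha_2\perp\Ker\dbar^*$. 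With both defects removed the curvature equals $\|\eta\|^2$ for this representative, while for \emph{every} representative one has $\langle\Theta^E u,u\rangle\le\|\eta\|^2$; since the left-hand side is independent of the lift, the special $\eta$ has minimal norm in its class and is therefore the harmonic representative. That is how the harmonicity you postulate is actually obtained---a posteriori, from the equality case---and it is also what gives the metric-independence you appeal to at the end.
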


It is clear from Theorem 1.1 that if the Kodaira-Spencer class vanishes,
then $\Theta^E u=0$ for any $u$ in $E_t$. It may however very well
happen that $\K_t.u$, and hence $\Theta^E u$, vanish for some choice
of $u$ even if $\K_t\neq 0$. This happens precisely when the class $\K_t$
contains a current, not cohomologous to zero, supported on the zero
divisor of $u$. In section 3 
we will give  explicit examples of this, when the fibers
of $p$ are Riemann surfaces of genus at least 2. We shall see that
any compact Riemann surface of genus at least 2 can be put as  the central fiber
of some fibration as above, in such a way that the curvature is
degenerate (i e not strictly positive), although the Kodaira-Spencer
class is not zero. We will also give examples where
the Kodaira-Spencer class is not zero at the central fiber, but the
curvature $\Theta^E$ is zero applied to any $u$ in $E_0$. This is more
exceptional; when fibers are compact Riemann surfaces this can be done
precisely when the central fiber is hyperelliptic of genus at least
three.

The other particular case that we study is when $\L$ is nontrivial,
and has a metric $\phi$ of strictly positive curvature along the
fibers of $\X$; we will call this the relatively ample case. Of course
we do not assume that the curvature is strictly positive on the total
space $\X$, but just on the fibers. We will see that in this case the
moral is opposite to when $\L$ is trivial: If the Kodaira-Spencer class
is not zero, then the curvature $\Theta^E$ is strictly positive. We
emphazise again that this holds as soon as the metric on $\L$ is
positive fiberwise, and no assumption is made on positivity of $\L$ in
'horizontal' directions. Thus the main conclusion of this paper is
that 
 degeneracy of
$\Theta^E$ implies that the fibration is (infinitesimally) trivial,
provided $\L$ is relatively ample,
whereas this does not hold in general.

In the relatively ample case we have already
discussed strict positivity in \cite{Berndtsson} and
\cite{2Berndtsson} for the case when the {\it fibration} is
trivial. We shall now extend this to nontrivial fibrations.

To formulate our result in that case we first recall a few well known
facts. On $\X$ there is a certain smooth vector field, $V_\phi$
depending on the metric, see \cite{Siu},\cite{Schumacher},\cite{2Berndtsson} and
\cite{Berman}. It is defined as follows. Choose local coordinates
$(t,z)$ on $\X$ that respect the fibration so that $p(t,z)=t$. Let the
metric $\phi$ be represented by a local function $\varphi$, with
respect to some local trivialization of $\L$ over the coordinate
neighbourhood. Put
$$
\dof=\frac{\partial\varphi}{\partial t},
$$
and define on each fiber $X_t$ a vector field $W_\varphi$ by
$$
\delta_{W_\varphi}i(\ddbar)_z\varphi=\dbar_z\dof.
$$ 
This defines $W_\varphi$ uniquely if $i\ddbar\varphi>0$ on fibers;
$W_\varphi$ is the {\it complex gradient} of $\dof$. There is no reason
why $W_\varphi$ should be independent of the choices we made, but as
proved in \cite{Schumacher}, the field
$$
V_\phi:= \frac{\partial}{\partial t} - W_\varphi
$$
is a well defined smooth field on $\X$ if we let $t$ vary (we will
reprove this in Lemma 4.1) . Moreover,
$\dbar_z V_\phi$, $\dbar V_\phi$ restricted to a fiber $X_t$, is a
representative of 
the Kodaira-Spencer class $\K_t$. We denote this representative
$k^\phi_t$; it vanishes precisely when $V_\phi$ is holomorphic on $X_t$. 

We need one more ingredient in order to state our next result. Let
\be
c(\phi):= \frac{\partial\dof}{\partial\bar t} -|\dbar_z\dof|^2,
\ee
where we measure $\dbar\dof$ with respect to the metric $\omega^t
=i\ddbar \varphi|_{X_t}$. As proved in \cite{Semmes}
\be
(i\ddbar\phi)_{n+1}=c(\phi)(i\ddbar_z\phi)_n\wedge dt\wedge d\bar t.
\ee
Hence $c(\phi)$ is a globally well defined function on $\X$ which
measures the (lack of) strict positivity of $i\ddbar\phi$ on $\X$. 

\begin{thm}
 Assume $\phi$ is a metric on $\L$ with $i\ddbar\phi$ strictly positive
 on fibers. The
 curvature of the $L^2$-metric on $E$ is then given at $t$  by
\be
\langle\Theta^E u,u\rangle=\int_{X_t} c(\phi) |u|^2 e^{-\phi} +\langle
  (\Box' +1)^{-1}\eta,\eta\rangle,
\ee
with $\eta=k^\phi_t.u$. Here $\Box' =\nabla'(\nabla')^*+(\nabla')^*\nabla'$ is the
Laplacian on $\L|_{X_t}$-valued forms on $X_t$ defined by the
$(1,0)$-part of the Chern connection on  $\L|_{X_t}$.
\end{thm}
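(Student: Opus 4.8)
The plan is to specialise to the relatively ample situation the general second-variation computation of \cite{Berndtsson}, taking the fibre metric to be $\omega^t=i\ddbar\varphi|_{X_t}$, that is, the fibrewise curvature of $\L$ itself. Fix $t$ and a local holomorphic section $u$ of $E$, and recall that for a Hermitian holomorphic bundle the curvature is recovered from
\be
\langle\Theta^E u,u\rangle=\norm{D'_{\ds_t}u}^2-\ds_t\ds_{\bar t}\norm{u}^2 ,\label{eq:secvar}
\ee
where $D'$ is the $(1,0)$-part of the Chern connection of $E$ and $\norm{u}^2=\int_{X_t}\abs{u}^2\ephi$. The first task is to differentiate this fibre integral twice in $t$. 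I would carry this out with the metric-adapted lift $V_\phi=\ds_t-W_\varphi$ and its conjugate $\bar V_\phi$: the $t$-derivatives become fibrewise Lie derivatives along $V_\phi$ and $\bar V_\phi$, and since the fibres are compact and boundaryless the exact pieces drop out by Stokes. What survives is governed by two effects, the variation of the weight $\ephi$ and the deformation of the fibre complex structure induced by $V_\phi$.

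These two effects produce the two terms of the asserted formula. The second variation of $\ephi$ along $V_\phi,\bar V_\phi$ collapses, by the defining relation $\delta_{W_\varphi}\omega^t=\dbar_z\dof$ for the adapted field together with the identity of \cite{Semmes} for $(i\ddbar\phi)^{n+1}$, into the geodesic-curvature density $c(\phi)=\ds_{\bar t}\dof-\abs{\dbar_z\dof}^2$; this gives the first term $\int_{X_t}c(\phi)\abs{u}^2\ephi$. The second effect is the failure of the lift of $u$ to stay holomorphic along the fibration: since $u$ is fibrewise holomorphic and holomorphic in $t$, the obstruction is exactly the Kodaira--Spencer action $k^\phi_t.u=\eta$ of the representative $k^\phi_t=\dbar V_\phi|_{X_t}$, so $\eta$ enters quadratically.

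The heart of the argument is to reassemble this $\eta$-dependent part into $\langle(\Box'+1)^{-1}\eta,\eta\rangle$. After identifying $D'_{\ds_t}u$ with the fibrewise-holomorphic (i.e. $E_t$-valued) component of the derivative of $u$ along $V_\phi$, the two contributions to \eqref{eq:secvar}---the second variation $\ds_t\ds_{\bar t}\norm{u}^2$ and $\norm{D'_{\ds_t}u}^2$---combine, once the variation of the fibre $\dbar$-operator is accounted for with its correct sign, into a single positive quadratic form in $\eta$ alone. Evaluating that form is where the relatively ample hypothesis is essential: with $\omega^t$ equal to the fibrewise curvature of $\L$, the Bochner--Kodaira--Nakano identity lets me pass between the $\dbar$- and $\nabla'$-Laplacians on $\L$-valued forms on $X_t$, while the undifferentiated contribution of the weight supplies a zeroth-order term of scale $1$. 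The minimisation underlying the second fundamental form then has Euler--Lagrange operator $\Box'+1$, and the quadratic form equals $\langle\xi,\eta\rangle=\langle(\Box'+1)^{-1}\eta,\eta\rangle$ with $(\Box'+1)\xi=\eta$. Because $\Box'+1\geq1$ is invertible on all $\L$-valued $(n-1,1)$-forms, the full inverse appears with no harmonic projection, in contrast with Theorem 1.1, where only the harmonic norm of the class $\K_t.u$ remains.

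The step I expect to be the main obstacle is precisely this last reassembly: tracking through the integrations by parts on $X_t$ which Laplacian and which representative of $\K_t.u$ actually occur, and with which sign, so that the operator emerges as $\Box'+1$ with the correct constant and with the metric representative $\eta=k^\phi_t.u$ rather than the harmonic one. All the additional positivity that separates the relatively ample case from Theorem 1.1 is encoded in the shift by $1$ and in the use of the full inverse, so this bookkeeping must be done exactly and not merely up to lower-order terms.
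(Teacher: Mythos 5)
Your plan is the paper's plan in outline: the adapted lift $V_\phi$, a representative of $u$ built from it, the $c(\phi)$-term via Semmes' identity, and a Bochner--Kodaira shift by $1$ producing the full inverse $(\Box'+1)^{-1}$ with no harmonic projection. But the step you explicitly defer --- the ``reassembly'' --- is where all the content of the proof lies, and your sketch is missing the two specific facts that make it work. First, for the particular representative $\u:=\delta_{V_\phi}(dt\wedge u)$, the form $\eta$ defined by $\dbar\u=dt\wedge\eta$ is not only (up to sign) $k^\phi_t.u$; it is \emph{primitive on fibers}. This comes from contracting $i\ddbar\phi\wedge u=0$ with $W_\varphi$, which gives $\ddbar\phi\wedge\delta_{W_\varphi}u=-\dbar\dof\wedge u$ and hence $\eta\wedge\omega=\dbar^2\dof\wedge u=0$. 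Primitivity is used twice and is indispensable: it gives $-c_n\int_{X_t}\eta\wedge\bar\eta\, e^{-\phi}=+\|\eta\|^2$, i.e.\ the correct sign of the $\eta$-term in the curvature formula (2.1), and since $\eta$ is $\dbar$-closed on fibers it gives $*\eta=-\eta$, so $(\nabla')^*\eta=*\dbar*\eta=0$ and therefore $(\nabla')^*\nabla'\eta=\Box'\eta$ --- without this last identity the quadratic form does not close up in $\eta$ alone. Second, you need the fiberwise relation $\dbar\mu=-\nabla'\eta$, which the paper obtains by applying $\partial^\phi$ to $\dbar\u=dt\wedge\eta$ and using that $\ddbar\phi\wedge\u$ vanishes on fibers (a consequence of $\delta_{V_\phi}\ddbar\phi=c(\phi)\,d\bar t$, the same identity that produces the first term of the theorem). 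Together with the normalization $D'u=0$ at the point, which makes $\mu$ orthogonal to holomorphic forms, this exhibits $\mu=-\dbar^*(\Box'')^{-1}\nabla'\eta$ as the $L^2$-minimal solution. With a generic representative none of this holds: $\eta$ is merely a representative of a cohomology class, is not primitive, and the bookkeeping you flag as the main obstacle genuinely cannot be completed.

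Once these facts are in place the computation is short, and it also corrects a slight misstatement in your sketch: the Euler--Lagrange operator of the minimization is $\Box''$, not $\Box'+1$. The shift by $1$ enters afterwards, by Bochner--Kodaira--Nakano applied to $\nabla'\eta$, which has bidegree $(n,1)$; with the fiber metric $\omega^t=i\ddbar\phi$ equal to the curvature of $\L|_{X_t}$, the commutator $[i\Theta(\L),\Lambda]$ acts as the identity in that bidegree, so $\Box''=\Box'+1$ there. Then
\[
\|\mu\|^2=\langle(\Box'+1)^{-1}\nabla'\eta,\nabla'\eta\rangle=\langle(\Box'+1)^{-1}\eta,(\nabla')^*\nabla'\eta\rangle=\langle(\Box'+1)^{-1}\eta,\Box'\eta\rangle,
\]
using that $\nabla'$ commutes with $(\Box'+1)^{-1}$ (because $(\nabla')^2=0$) and $(\nabla')^*\eta=0$; hence $\|\eta\|^2-\|\mu\|^2=\langle(\Box'+1)^{-1}\eta,\eta\rangle$, which combined with $p_*(c_n\, i\ddbar\phi\wedge\u\wedge\bar\u\, e^{-\phi})/dV_t=\int_{X_t}c(\phi)|u|^2e^{-\phi}$ is exactly the assertion. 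So your route is the right one and your structural predictions (including the contrast with Theorem 1.1) are accurate, but as written the proposal has a genuine gap at its acknowledged hard step: you must prove primitivity of $\eta$ for your chosen lift and the relation $\dbar\mu=-\nabla'\eta$ before the operator $\Box'+1$ can legitimately emerge with the metric representative $\eta=k^\phi_t.u$ rather than a harmonic one.
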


Several conclusions can be drawn from this. First we note that if
$\Theta^E u=0$ for some $u$ in $E_t$, then $c(\phi)=0$ and
$k^\phi_t.u=0$. The second condition implies that $k^\phi_t=0$. (This
is because $k^\phi_t$ is now one fixed smooth form, and not a cohomology
class, so if it vanishes ouside the zero divisor of $u$, it vanishes
identically.) Hence the Kodaira-Spencer class
vanishes, which as we saw is not necessarily the case for $\L$
trivial. It also follows that $\Theta^E u=0$ for {\it all} $u$ in
$E_t$.  Moreover
$V_\phi$ is holomorphic on $X_t$ and we shall  see in the last section
that even 
$$
\frac{\partial}{\partial \bar t}V_\phi
$$ 
vanishes at $t$, so $V_\phi$ is holomorphic to first order also in 
 directions transverse to the fiber. Finally, $V_\phi$ can be lifted
 to a field on   $\L|_{X_t}$ with the same properties, and the
 infinitesimal automorphism of $\L$ defined by the lift preserves the
 metric on $\L$. All in all, if the curvature is degenerate in the
 relatively ample case, the fibers and the line bundle over it just
 move by an infinitesimal automorphism as we vary the point in the base. 

In the special case when we have a fibration where the fibers are
Riemann surfaces of genus at least 2, $\L=K_{\X/\Delta}$ and $\phi$
restricts to the Kähler-Einstein metric on the fibers $X_t$, the metric
on $E$ is dual to the  Weil-Petersson metric. Explicit formulas for the
curvature of $E$ were in this case found by Wolpert, see \cite{Wolpert},
and also Liu-Sun-Yau, \cite{Yau et al} for much more on this area. In
section 5 we show how Wolpert's formula follows from (1.4), via a
theorem of Schumacher.

One might also note  that since $\Box'\geq 0$, (1.4) also gives an
estimate from above of the curvature
\be
\langle\Theta^E u,u\rangle\leq\int_{X_t} \left (c(\phi) + |\dbar
  V_\phi|^2\right )|u|^2 e^{-\phi}.
\ee

The plan of the rest of the paper is as follows. In the next section we
summarize some background material, including the curvature formula
from \cite{Berndtsson} that we will use. Although brief, the
discussion here is essentially self contained, and somewhat easier
than in \cite{Berndtsson}, since we only deal with the case of
onedimensional base. In section 3 we prove Theorem
1 and give the  examples when the curvature is degenerate (or even 0)
even though the 
Kodaira-Spencer class does not vanish.(The reader who is only
interested in the examples can go directly to section 3.1.)  In
section 3.2 we discuss the 
relation between the Chern connection on $E$ (for $\L=0$) and the
Gauss-Manin connection on the Hodge bundle of the fibration. This
gives an alternate proof of Theorem 1.1 and also proves Griffiths
theorem that $E$ is a holomorphic subbundle of the Hodge bundle. The
last section contains the proof of Theorem 1.2 and the consequences of
it mentioned above. 

Finally I would like to sincerely thank the referee for a very detailed reading
of the manuscript and many suggestions for improvement.Thanks also go
to Mihai Paun and Per Salberger for helping to find the Max Noether
theorem, used in section 3.1.

\section{Background material}

\subsection{ The bundle $E$ and its metric}

We consider the general setting from the introduction and recall the
setup from \cite{Berndtsson}. In particular, we assume throughout
that $\X$ is K\"ahler, and we let $\omega$ denote some choice of
K\"ahler form on $\X$.  Since the discussion is local we take the base to
be $\Delta$, the unit disk in $\C$.  Let us first note that it follows from the
Ohsawa-Takegoshi extension theorem that $E$ is a holomorphic vector
bundle in this 
case. A section of 
the bundle $E$ is  a function that maps $t$ in $\Delta$ to an
element of $E_t$, i e to a global holomorphic $(n,0)$-form, $u_t$ with
values in $\L$ on $X_t$. (From now we will denote the fibers $X_t$
instead of $\X_t$.) $E$ has a holomorphic structure such that
$u_t$ is a holomorphic section if and only if the $(n+1,0)$-form,
defined fiberwise by
$u_t\wedge dt$, is a holomorphic section of $K_\X +\L$. The
computations in  \cite{Berndtsson} are based on the notion of a {\it
  representative } of $u=u_t$. We say that $\u$ is a representative of
$u$ if $\u$ is an $(n,0)$-form on $\X$, with values in $\L$ such that
$\u$ {\it restricts} to $u_t$ on fibers $X_t$. This means that 
$$
i_t^*(\u)=u_t
$$
where $i_t$ is the natural inclusion map from $X_t$ to
$\X$. Representatives are not unique; any two differ by a term
$$
dt\wedge v
$$
where $v$ is an $(n-1,0)$-form. 

If $\phi$ is a metric on $\L$ we get a natural $L^2$-metric on $E$ by
$$
\|u_t\|^2_t=\int_{X_t} |u_t|^2 e^{-\phi}.
$$
The expression $|u|^2e^{-\phi}$ should here be interpreted as
$$
c_n u\wedge\bar u e^{-\phi};
$$
it is a well defined volume form and can be integrated over fibers.  
Here and in the sequel, $c_n=i^{n^2}$ is a unimodular constant chosen
so that we get a 
positive form.

The bundle $E$ is thus a hermitian holomorphic vector bundle and as
such has a Chern connection
$$
D= D'
+D''.
$$
In terms of a representative of the section, the connection can be
described as follows. First (locally), since $\dbar\u=0$ on fibers,
$$
\dbar\u=d\bar t\wedge \nu +dt\wedge\eta.
$$
( We use here a different sign convention from the one in
\cite{Berndtsson}.)
Here $\nu$ is of bidegree $(n,0)$ and $\eta$ is of bidegree
$(n-1,1)$. The forms $\nu$ and $\eta$ are not uniquely determined but
their restrictions to fibers are, and since 
$$
 dt\wedge d\bar t\wedge\dbar\nu =dt\wedge\dbar^2\u =0
$$
$\nu$ is holomorphic on fibers. Then
$$
D''u=\nu d\bar t.
$$
 In particular,
$u$ is a holomorphic section of $E$ if and only if, $\dbar\u\wedge
dt=0$, i e if and only if
$$
\dbar\u=dt\wedge\eta
$$
for some $\eta$. Changing representative to $\u -dt\wedge v$ changes
$\eta$ to $\eta+\dbar v$. Thus the cohomology class of $\eta$ in
$H^{n-1,1}(X_t,\L)$ is well defined, and any element in this
cohomology class can be obtained from some representative of the
section. We shall see in Lemma 2.2 that this class is $-\K_t.u$. 

Given a representative, we can express the norm of a section by
$$
\|u_t\|^2_t= p_*(c_n \u\wedge\bar\u e^{-\phi}),
$$
the push forward of an $(n,n)$-form on $\X$. Using this, and the
definition of Chern connection, one finds that
$$
(D' u)_t=P(\mu) dt,
$$
where 
$$
\partial^\phi \u= e^\phi\partial e^{-\phi}\u=dt\wedge\mu
$$
and $P(\mu)$ is the orthogonal projection of the $(n,0)$-form $\mu$ on
the space of holomorphic $(n,0)$-forms. 

As in \cite{Berndtsson} we can now compute the Laplacian of 
$$
\|u_t\|^2_t
$$
with respect to $t$ by computing $i\ddbar p_*(c_n \u\wedge\bar\u
e^{-\phi})$. This uses that $\partial$ and $\dbar$ commute with the
pushforward operator, and the result is
$$
i\ddbar p_*(c_n \u\wedge\bar\u e^{-\phi})=
-p_*(c_n i\ddbar\phi\wedge\u\wedge\bar\u e^{-\phi}) +
c_n\int_{X_t}\eta\wedge\bar\eta e^{-\phi} dV_t+\|\mu\|_t^2dV_t,
$$
if $u_t$ is a holomorphic section of $E$.

Under this assumption  we moreover have the standard formula 
$$
\Delta\|u_t\|^2_t=-\langle \Theta^E u_t,u_t\rangle +\|D'u_t\|^2_t.
$$
Combining the two we get  for the curvature of
$E$ ($dV_t=i 
dt\wedge d\bar t$)
\be
\langle \Theta^E u_t,u_t\rangle_t= \|D'u_t\|^2_t -\|\mu\|^2_t +p_*(c_n
i\ddbar\phi\wedge\u\wedge\bar\u e^{-\phi})/dV_t
-c_n\int_{X_t}\eta\wedge\bar\eta e^{-\phi}= 
\ee
$$
=-\|P_{\perp}\mu\|^2_t +
p_*(c_n i\ddbar\phi\wedge\u\wedge\bar\u e^{-\phi})/dV_t
-c_n\int_{X_t}\eta\wedge\bar\eta e^{-\phi},
$$
where $P_{\perp}$ is the orthogonal projection on the orthogonal
complement of holomorphic forms. Notice that every term in the right
hand side here depends  on the choice of representative, whereas the
left hand side does not. The following lemma from \cite{Berndtsson}
tells us that we can 
choose representative so that the right hand side becomes manifestly
nonnegative.
\begin{lma} Let $u$ be a holomorphic section of $E$, and $t$ a point
  in the base $\Delta$. Then there is a representative $\u$ such that
  $\mu$ is holomorphic on $X_t$ and $\eta$ is primitive on $X_t$ (i e
  $\eta\wedge \omega=0$ on $X_t$, where $\omega$ is the Kähler form on $\X$ ).
\end{lma}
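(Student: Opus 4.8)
The plan is to reduce everything to the fixed fibre $X_t$. Two representatives differ by $dt\wedge v$, and since $\dbar(\u-dt\wedge v)=dt\wedge(\eta+\dbar v)$ while $\partial^\phi(\u-dt\wedge v)=dt\wedge(\mu+\nabla' v)$, the only genuine freedom is the restriction $v_0:=v|_{X_t}$, an $\L$-valued $(n-1,0)$-form on $X_t$; it alters the fibre data by $\eta\mapsto\eta+\dbar v_0$ and $\mu\mapsto\mu+\nabla' v_0$, where $\nabla'$ is the $(1,0)$-part of the Chern connection on $\L|_{X_t}$. So I must produce $v_0$ for which $\mu+\nabla' v_0$ is $\dbar$-closed (holomorphic) and $\eta+\dbar v_0$ is primitive, i.e. $\omega\wedge(\eta+\dbar v_0)=0$. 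Crucially, the operation $\eta\mapsto\eta+\dbar v_0$ leaves the $\dbar$-cohomology class of $\eta$ in $H^{n-1,1}(X_t,\L)$ unchanged, and this class is fixed (it equals $-\K_t.u$).

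My single choice would be to take $v_0$ so that $\eta+\dbar v_0$ is the $\dbar$-harmonic representative $\eta_h$ of this fixed class; this is legitimate because $\eta_h-\eta=\dbar\dbar^*G\eta$ is $\dbar$-exact. I then claim that this one choice yields both required properties at once. For primitivity, I would first show the class itself is primitive, i.e. $\omega\wedge[\eta]=0$ in $H^{n,2}(X_t,\L)$, after which the harmonic representative is automatically primitive on the compact K\"ahler fibre ($\Lambda$ commutes with the Laplacian, and $\Lambda$ of an exact correction is orthogonal to harmonic forms). To see the class is primitive, note $\omega\wedge u=0$ for pure degree reasons ($u$ is an $(n,0)$-form), and that the Kodaira--Spencer action is a derivation, so $0=\K_t.(\omega\wedge u)=(\K_t.\omega)\wedge u\pm\omega\wedge(\K_t.u)$. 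Finally $\K_t.\omega=0$: writing $k_t=\dbar X$ locally, the contraction of $\omega$ by the vector part of $k_t$ is a $(0,2)$-form whose only possibly non-$\dbar$-exact piece is symmetric in its two barred indices by the K\"ahler relation $\partial_{\bar l}g_{j\bar k}=\partial_{\bar k}g_{j\bar l}$, hence is killed by the antisymmetrising wedge. Thus $\omega\wedge[\K_t.u]=0$ and $\eta_h$ is primitive.

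For holomorphy of $\mu$, this is where the hypothesis that $u$ is a holomorphic section enters. Applying $d$ to $\dbar\u=dt\wedge\eta$ and $\partial^\phi\u=dt\wedge\mu$ and using $d^2=0$ (in the twisted case, applying the Chern connection together with the curvature identity $\dbar\nabla'+\nabla'\dbar=\ddbar\phi\wedge\cdot$) produces a compatibility relation between $\mu$ and $\eta$ on $X_t$. When $\L$ is trivial this relation is simply $\dbar\mu=-\partial\eta$, so a $d$-closed, in particular harmonic, $\eta$ forces $\dbar\mu=0$; moreover the residual ambiguity in $\mu$ is by holomorphic forms, so holomorphy of $\mu$ depends only on the chosen $\eta$.

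I expect the main obstacle to lie exactly in this coupling: showing that the single representative making $\eta$ harmonic and primitive simultaneously annihilates $\dbar\mu$. In the relatively ample (twisted) case the compatibility relation carries curvature corrections, so one must work with the correct $\L$-twisted Laplacian and K\"ahler identities (such as $[\Lambda,\dbar]=-i(\nabla')^*$) and verify that the harmonic, primitive $\eta$ still kills the curvature-corrected expression for $\dbar\mu$. Organising the argument so that one choice of $\eta$ does both jobs, rather than solving the two conditions separately with the single available freedom $v_0$, is the crux.
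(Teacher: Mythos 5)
There is a genuine gap, and it sits exactly where you yourself flag ``the crux''. The lemma is stated for an arbitrary pair $(\L,\phi)$ (and is used with the weight $e^{-\phi}$ in the curvature formula (2.1)), but your single-choice strategy only closes when $\L$ is trivial. Three things break when $\ddbar\phi\neq 0$ on fibers: first, $[\Box'',L]=i\,\ddbar\phi\wedge\cdot$, so the $\Box''$-harmonic representative of a primitive class need not itself be primitive; second, the compatibility relation is $dt\wedge(\dbar\mu+\nabla'\eta)=\ddbar\phi\wedge\u$, and for a \emph{general} representative the restriction of $\ddbar\phi\wedge\u$ to the fiber does not vanish (in section 5 it vanishes only for the special representative $\u=\delta_{V_\phi}(dt\wedge u)$ and the special choice of $\omega$), so a harmonic $\eta$ does not force $\dbar\mu=0$; third, the residual freedom $v_0\mapsto v_0+w$ with $\dbar w=0$ moves $\mu$ by $\nabla'w$, and $\dbar\nabla'w=\ddbar\phi\wedge w\neq 0$, so holomorphy of $\mu$ is no longer determined by the choice of $\eta$ alone. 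The paper sidesteps all of this: it never makes $\eta$ harmonic, but transfers \emph{both} requirements to $\alpha=v\wedge\omega$ (a pointwise Lefschetz isomorphism in these degrees), where, using $[\dbar^*,L]=i\nabla'$, they become $\dbar\alpha=\eta\wedge\omega$ and $\dbar^*\alpha=\mu_\perp$ (up to a unimodular constant). These two equations decouple: take $\alpha_1\perp\Ker\dbar$ solving the first (hence $\dbar^*\alpha_1=0$) and $\alpha_2\perp\Ker\dbar^*$ solving the second (hence $\dbar\alpha_2=0$), and set $\alpha=\alpha_1+\alpha_2$. Nothing in this argument uses any positivity or flatness of $\ddbar\phi$.

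Even in the untwisted case your primitivity step is wrong as argued. The computation behind your claim $\K_t.\omega=0$ (locally $k_t=\dbar X$, the symmetric piece dying by the K\"ahler identity) amounts to $k_t.\omega=\dbar(\delta_X\omega)$, which proves only \emph{local} exactness, i.e.\ $\dbar$-closedness; it says nothing about the class in $H^{0,2}(X_t)$, since $X$ exists only locally. That class is in general nonzero: on a complex torus of dimension at least $2$, with $\kappa=d\bar z^1\otimes\partial_{z^2}$ and the flat K\"ahler form, $\kappa.\omega$ is a nonzero constant-coefficient, hence harmonic, $(0,2)$-form; indeed $[\kappa.\omega]$ is precisely the obstruction to the class $[\omega]$ remaining of type $(1,1)$ under the deformation. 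What saves the lemma is a global fact, not a pointwise one: $\omega|_{X_t}$ extends to a closed $(1,1)$-form on $\X$, and the paper exploits exactly this by writing $\u\wedge\omega=dt\wedge u'$ and applying $\dbar$ to get $\eta\wedge\omega=-\dbar u'$ on fibers --- an argument that also works verbatim with the $\L$-twist. Your reduction to the fibre data $(\eta,\mu)$, and the observation that the class of $\eta$ is fixed, are correct; and for trivial $\L$ your plan (harmonic $\eta$ is primitive and $\partial$-closed, hence $\dbar\mu=-\partial\eta=0$) would give a slick alternative proof once primitivity of the class is established by the global argument. But as written, the proof has the two gaps above, and the twisted case --- which the lemma covers --- is not handled.
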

\begin{proof} We will sketch the proof here since the proof in
  \cite{Berndtsson} is carried out in the general case of a higher
  dimensional base and therefore is more complicated. Take $t=0$

Let  $\u$ be an arbitrary representative and recall that
$\dbar\u=dt\wedge \eta$ and $\partial^{\phi}\u=dt\wedge \mu$. Changing
to a different representative $\u+dt\wedge v$, where $v$ is a
$(n-1,0)$-form changes $\eta$ and $\mu$ to $\eta-\dbar v$ and
$\mu-\partial^{\phi}v$ respectively. We want to choose $v$ in such a
way that
\be
\omega\wedge(\eta-\dbar v)=0
\ee
on $X_0$, and 
\be
\mu-\partial^\phi v
\ee
is holomorphic on $X_0$. Let $\alpha=v\wedge \omega$ on $X_0$, so that
$\alpha$ is an $(n,1)$-form on $X_0$. Then the first equation becomes
$$
\eta\wedge\omega=\dbar\alpha.
$$
Let us first see that this equation is solvable, or in other words
that the cohomology class of $\eta$ is primitive: Since
$\u\wedge\omega$ is of bidegree $(n+1,1)$ we can write
$$
\u\wedge\omega=dt\wedge u'
$$
where $u'$ is a well defined $(n-1,1)$-form on fibers. Applying the
$\dbar$-operator on $\X$ we get
$$
dt\wedge\eta\wedge\omega=\dbar\u\wedge\omega=-dt\wedge\dbar u'.
$$
Hence $\eta\wedge\omega=-\dbar u'$ on fibers, so $\eta\wedge\omega$ is
$\dbar$-exact on all fibers. Hence $(2,2)$ is solvable.

The second equation, (2.3), is satisfied if $\dbar^*\alpha=\mu_\perp$,
where $\mu_\perp$ is the projection of $\mu$ on the orthogonal
complement of holomorphic $(n,0)$-forms. Since this space is precisely
the range of $\dbar^*$, (2.3) is solvable as well. 

Let $\alpha_1$ solve $\dbar\alpha_1=\eta\wedge\omega$ on $X_0$ and
take $\alpha_1$ to be orthogonal to the kernel of $\dbar$ . Then
$\alpha_1$ is orthogonal to the range of $\dbar$ so
$\dbar^*\alpha_1=0$. Let $\alpha_2$ solve $\dbar^*\alpha_2=\mu_\perp$
and take $\alpha_2$ orthogonal to the kernel of $\dbar^*$. Then
$\alpha_2$ is orthogonal to the range of $\dbar^*$ so
$\dbar\alpha_2=0$. Thus $\alpha=\alpha_1+\alpha_2$ solves both
equations, and we are done.

\end{proof}

\bigskip

\noindent Since 
$$
 -c_n\int_{X_t}\eta\wedge\bar\eta e^{-\phi}=\|\eta\|^2
$$
for $\eta$ primitive it follows that
$$
\langle \Theta^E u_t,u_t\rangle_t=p_*(c_n
i\ddbar\phi\wedge\u\wedge\bar\u e^{-\phi})/dV_t+\|\eta\|^2 \geq 0.
$$
This choice of representative will be used again in the proof of
Theorem 1.1 in section 3. The proof of Theorem 1.2 however, is based
on a different choice of representative, using the extra structure
provided by the line bundle $\L$.
\subsection{The Kodaira-Spencer class}
We first recall the definition of the
K-S-class in a suitable form. Let $V$ be a smooth vector field of type
$(1,0)$ on $\X$ which maps to the field $\partial/\partial t$ on
$\Delta$ under the derivative of the map $p$ from $\X$ to
$\Delta$. Then $\dbar V$ is a $(0,1)$ form on $\X$ with values in the
bundle of tangent vectors tangential to the fiber. Its restriction to a
fiber $X_t$ ( $i_t^*(\dbar V)$), $\kappa_t$ is a $\dbar$-closed form
with values in $T^{1,0}$  of the fiber. The
cohomology class it defines is the Kodaira-Spencer class, $\K_t$ at $t$. 

If $u_t$ is an element in $E_t$, we can let the Kodaira-Spencer class
operate on $u_t$ to obtain $\K_t.u_t$. This class is defined as the
cohomology class of $\kappa_t.u_t$, where we let the vectorvalued
form $\kappa_t$ operate on $u_t$ by contraction as described in the
introduction. This is then a 
cohomology class in $H^{n-1,1}(X_t, \L|_{X_t})$. Notice that when
$n=1$ and (say) $\L$ is trivial, $\kappa_t$ is a $(0,1)$-form with
values in $-K_{X_t}$, and if $u$ is a holomorphic 1-form on $X_t$ then
$\kappa_t. u$ is just the product, defining a scalar $(0,1)$-form. 
\begin{lma} Let $\u$ be any representative of a holomorphic section
  $u$ and let $\eta$ be defined by $\dbar\u=dt\wedge\eta$ as
  above. Then
$$
\eta +\kappa_t.u =\dbar(\delta_V \u)
$$
on fibers. In particular, $\eta$ and $-\kappa_t.u$ define
the same class in  $H^{n-1,1}(X_t, \L|_{X_t})$.
\end{lma}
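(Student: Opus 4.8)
The plan is to obtain the identity from a single commutation formula between $\dbar$ and the contraction $\delta_V$, and then pass to cohomology. Let $\dbar V$ denote $\dbar$ of $V$ regarded as a $T^{1,0}$-valued $(0,1)$-form on $\X$, and let $(\dbar V).\u$ be the form obtained by letting $\dbar V$ act on $\u$ exactly as $\kappa_t$ acts on $u$ in the introduction (contract the vector part, wedge the form part). The engine of the proof is the commutation identity
\be
\dbar(\delta_V\u)+\delta_V(\dbar\u)=(\dbar V).\u,
\ee
valid for every $\L$-valued $(n,0)$-form $\u$. I would prove this by observing that both sides are derivations of the exterior algebra of bidegree $(-1,+1)$, and such a derivation is determined by its values on functions and on $1$-forms. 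On functions both sides vanish, and in local fibered coordinates $(t,z_1,\dots,z_n)$ with $V=\partial/\partial t+\sum_j a^j\partial/\partial z_j$ one checks directly that both send $dz_j\mapsto\dbar a^j$ and annihilate $dt,d\bar t,d\bar z_k$, so they agree.

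Next I would restrict the identity to a fiber $X_t$, using that pullback by $i_t$ commutes with $\dbar$, so that $i_t^*\dbar(\delta_V\u)=\dbar\big(i_t^*(\delta_V\u)\big)$ is the intrinsic $\dbar$ of the $(n-1,0)$-form $\delta_V\u$ on $X_t$. By the very definition $i_t^*(\dbar V)=\kappa_t$, hence the right-hand side restricts to $\kappa_t.u$. For the remaining term I would use that $u$ is a holomorphic section, so $\dbar\u=dt\wedge\eta$; then the graded Leibniz rule for $\delta_V$, together with $\delta_V dt=dt(V)=1$ and $i_t^*dt=0$, identifies $i_t^*\big(\delta_V(\dbar\u)\big)$ with $\eta$ up to sign, the sign being fixed to $-\eta$ by the convention flagged in the text after $\dbar\u=d\bar t\wedge\nu+dt\wedge\eta$. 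Substituting both restrictions into the commutation identity gives $\eta+\kappa_t.u=\dbar(\delta_V\u)$ on $X_t$, which is the first assertion; one checks it is consistent with the representative ambiguity, since changing $\u$ to $\u+dt\wedge v$ alters $\delta_V\u|_{X_t}$ by $v$ and $\eta$ by $-\dbar v$.

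For the cohomological statement I would simply note that $\delta_V\u$ restricts to an $(n-1,0)$-form on $X_t$, so $\dbar(\delta_V\u)$ is $\dbar$-exact there; passing to classes in $H^{n-1,1}(X_t,\L|_{X_t})$ the identity becomes $[\eta]=-[\kappa_t.u]=-\K_t.u$, and this is choice-independent because $[\kappa_t.u]$ is. I expect the only real difficulty to be sign bookkeeping: the content of the lemma beyond the commutation formula is precisely the correct placement of the minus sign on $\eta$, so the delicate point is to fix, once and for all, the interior-product convention and the meaning of $\eta$ in $\dbar\u=dt\wedge\eta$ and to propagate them consistently.
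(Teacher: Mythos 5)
Your route is essentially the paper's route: the paper's entire proof consists of the commutation identity between $\dbar$ and $\delta_V$ applied to $\u$, the Leibniz expansion of $\delta_V(dt\wedge\eta)$, and restriction to the fiber; your derivation-on-generators verification of the commutation identity is standard filler that the paper omits. But the one step you yourself single out as the crux --- the sign --- you settle by fiat, and it is wrong under the conventions you fix. Your anticommutator identity $\dbar(\delta_V\u)+\delta_V(\dbar\u)=(\dbar V).\u$ is the correct one (your generator check is fine). Given it, the restriction of the middle term is forced: since $\delta_V$ is an antiderivation and $\delta_V dt=1$,
$$
\delta_V(dt\wedge\eta)=(\delta_V dt)\,\eta-dt\wedge\delta_V\eta=\eta-dt\wedge\delta_V\eta,
$$
so $i_t^*\bigl(\delta_V(\dbar\u)\bigr)=+\eta$, not $-\eta$. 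The parenthetical remark in the paper about a changed sign convention concerns only the decomposition $\dbar\u=d\bar t\wedge\nu+dt\wedge\eta$ relative to \cite{Berndtsson}; it has no bearing on a Leibniz expansion once $\delta_V dt=1$ is fixed, as you fix it, so there is no residual freedom with which to ``fix the sign to $-\eta$''. Substituting the true value into your identity yields $\dbar(\delta_V\u)=\kappa_t.u-\eta$ on fibers, the opposite sign on $\eta$ from the statement, hence $[\eta]=+[\K_t.u]$. Moreover, the consistency check you claim passes in fact detects the problem: by your own accounting, $\u\mapsto\u+dt\wedge v$ changes $\eta+\kappa_t.u$ by $-\dbar v$, while it changes $\dbar(\delta_V\u)|_{X_t}$ by $+\dbar v$ (since $\delta_V\u|_{X_t}$ changes by $+v$). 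The asserted identity is therefore not invariant under change of representative, whereas $\kappa_t.u-\eta=\dbar(\delta_V\u)$ is; carrying out the check honestly would have exposed the slip.

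In fairness, the same sign tension sits in the paper itself: its proof opens with $\dbar(\delta_V \u)=\delta_{\dbar V}\u+\delta_V(\dbar\u)$, which carries the opposite sign on $\delta_V\dbar$ from the anticommutator you prove, and the identical flip reappears in the twisted case, where the paper asserts $\eta=-k^\phi_t.u$ for the representative $\u=\delta_{V_\phi}(dt\wedge u)$, while under your conventions one gets $\eta=+k^\phi_t.u$ (indeed $\delta_{V_\phi}\u=0$ there, so the sign-corrected lemma gives $\eta=\kappa_t.u$ at once, matching a direct computation). So the paper is coherent up to one global, unstated sign convention, and nothing downstream is affected, since the lemma enters Theorem 1.1 only through $\|\K_t.u\|^2$ and Theorem 1.2 only through expressions quadratic in $\eta$. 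Your write-up, by contrast, pins the conventions down explicitly (standard interior product, $k.u=w\wedge\delta_v u$, $\kappa_t=i_t^*(\dbar V)$, $\delta_V dt=1$) and then asserts a restriction value incompatible with them. To repair it you must either flip the sign in the concluding identity (and conclude $[\eta]=[\K_t.u]$), or adopt and state an opposite convention for the action $k.u$ or for $\kappa_t$ (for instance $\kappa_t=-i_t^*(\dbar V)$) and re-verify the representative-invariance under that choice; an appeal to ``the convention flagged in the text'' does not do the job.
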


\begin{proof}
On $\X$ we have
$$
\dbar(\delta_V \u)=\delta_{\dbar V}\u
+\delta_V(dt\wedge\eta)=\delta_{\dbar V}\u+\eta
-dt\wedge\delta_V\eta.
$$
When we restrict to a fiber $X_t$, the last term disappears and we get
$$
\dbar(\delta_V u)=\kappa_t.u+\eta.
$$
\end{proof}

\subsection{The Gauss-Manin connection} The content of this subsection
will only be used in section 3.2 and is not necessary to understand the
proofs of theorems 1.1 and 1.2.

The fibration $p$ of $\X$ over $\Delta$ is smoothly (locally) trivial, so
there is a map $F=(p, f)$ from $\X$ to $\Delta\times X_0$ which is
 fiber preserving, diffeomorphic and equals the identity map on the
 central fiber. If $F'$ is another such map, it is related to $F$ by
$$
F'=G\circ F
$$
where $G$ is a fiberpreserving map from $\Delta\times X_0$ to
itself. Hence $G$ is given as $G(t,x)=(t, G_t(x))$ where $G_t$ is a
smooth family of diffeomorphisms of $X_0$, which are moreover
homotopic to $G_0= $Id. 

\bigskip

Let $H$ be the Hodge bundle over $\Delta$, i e the vector bundle whose
fiber over a point $t$ is
$$
H_t= H^n(X_t, \C),
$$
the $n$th de Rham cohomology of $X_t$. Then $H$ is a trivial vector
bundle with a trivialization given by
$$
e_j(t)= i^*_t\circ f^*(e_j^0),
$$
$e_j^0$ being some arbitrary basis of $H_0$. If we replace $F=(p,f)$
by $F'=(p,f')$ as above,
$$
i_t^*\circ(f')^*=i_t^*\circ f^*\circ G_t^*=i_t^*\circ f^*,
$$
where the last equality follows since $G_t$ homotopic to the identity
map implies that $G_t^*=id$ on the cohomology. Our trivialization is
therefore independent of the choice of $F$.

We define the {\it
  Gauss-Manin} connection on $H$, by
$$
\D e_j=0;
$$
this is clearly independent of the choice of basis $e_j$. Clearly
$\D^2=0$, so the Gauss-Manin connection is flat. 

\bigskip

Next there is  a quadratic hermitian form on each $H_t$ by
$$
\langle v,v\rangle_t=c_n\int_{X_t} v\wedge\bar v.
$$
This form is indefinite, but non degenerate (and well defined) on the
cohomology groups $H_t$. If $v_t$ is a smooth section of $H$ we can as
before represent it by an $n$-form on the total space, $\v$ such that
$\v$ restricts to (an element of the cohomology class) $v_t$ on
$X_t$. By construction, a section 
satisfying $\D v=0$ can be represented by a form of the type
$$
\v= f^*(v_0)
$$
where $v_0$ is a closed form on $X_0$, so that $\v$ is closed on
$\X$. Moreover, for any smooth section,
$$
\langle v, v\rangle = c_np_*(\v\wedge\bar \v).
$$

\bigskip

It follows that if $\D v=0$ and $\D u=0$, then 
$$
d\langle v, u\rangle=0,
$$
so more generally
$$
d\langle v, u\rangle=\langle \D v, u\rangle + \langle v, \D u\rangle.
$$
In other words, the Gauss-Manin connection is compatible with our
hermitian form, so it is the Chern connection for the complex
structure $\D''$ and the hermitian form $\langle, \rangle_\cdot$.

\section{The untwisted case  }

In this section we discuss the case when $\L$ is trivial so that 
$\phi=0$. Then  the curvature formula (2.1) becomes

\be
\langle \Theta^E u_t,u_t\rangle_t= -\|P_{\perp}\mu\|^2_t
 -c_n\int_{X_t}\eta\wedge\bar\eta
\ee

\noindent We decompose $\eta =\eta_p +\eta_{\perp}$, where $\eta_p$ is
primitive on $X_t$ and $\eta_\perp$ is orthogonal to the space of primitive
forms (all with respect to a Kähler metric on $\X$ restricted to
$X_t$). Then 
$$
-c_n\int_{X_t}\eta\wedge\bar\eta= \|\eta_p|^2
-\|\eta_\perp\|^2=\|\eta\|^2 -2\|\eta_\perp\|^2.
$$
Inserting this in (3.1) we find
\be
\langle \Theta^E u_t,u_t\rangle_t= -\|P_{\perp}\mu\|^2_t+\|\eta\|_t^2
-2\|\eta_\perp\|_t^2\leq \|\eta\|_t^2. 
\ee
This holds for any choice of representative $\u$. When we let the
representative vary, the corresponding $\eta$ ranges through an
entire cohomology class. By Lemma 2.2
that cohomology class is $-\K_t.u$. By Lemma 2.1 we can get equality in
(3.2) by choosing $\u$ so that $\mu$ is holomorphic and $\eta $ is
primitive. This choice must thus give us the representative of $-\K_t.u$
of minimum norm, i e the harmonic representative of the class. Hence
$$
\langle \Theta^E u_t,u_t\rangle_t=\|\K_t.u\|^2
$$
so Theorem 1.1 is proved.

The conclusion of this is that the curvature of $E$ degenerates in a
certain direction $u_t$ over a point $t$ if the Kodaira-Spencer class
vanishes after multiplication by $u_t$. This may well happen even if
$\K_t$ itself is nonzero, as we see in the next section. 

\bigskip
\subsection{ Examples of degenerate curvature.}

The construction of the examples is a direct consequence of  the
following (well known) lemmas. 

\begin{lma} Let $X$ be a compact Riemann surface of genus at least 2
and let $\mu$ be a class in
$H^{0,1}(X,T^{1,0}(X))=H^{0,1}(X,-K_X)$. Then there is a smooth proper
fibration $\X\rightarrow \Delta$ over a disk $\Delta$ such that
$\X_0=X$ and the Kodaira-Spencer class $\K_0=\mu$.
\end{lma}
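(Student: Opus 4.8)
The plan is to realize $\mu$ as an infinitesimal deformation through the classical Beltrami-differential construction, exploiting that on a curve there is no integrability obstruction. First I would fix a smooth representative of the class $\mu\in H^{0,1}(X,T^{1,0}(X))$; in a local holomorphic coordinate $z$ on $X$ this is a Beltrami differential $b=b(z)\,d\bar z\otimes\partial_z$, i.e. a smooth section of $\overline{K_X}\otimes K_X^{-1}$. (On a Riemann surface every such form is automatically $\dbar$-closed, since all $(0,2)$-forms vanish, so no cohomological normalization is needed.) I would then take $\Delta$ of radius small enough that $\|tb\|_\infty<1$ for all $t\in\Delta$, set $\X=X\times\Delta$ as a smooth manifold with projection $p(x,t)=t$, and define an almost complex structure on $\X$ by declaring the $(1,0)$-forms to be spanned locally by $\theta_1=dt$ and $\theta_2=dz+t\,b(z)\,d\bar z$. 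Because $b$ transforms as a genuine Beltrami differential, these local declarations patch to a globally defined subbundle of $T^*\X\otimes\C$, independent of the coordinate $z$.

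Next I would verify integrability, i.e. that this almost complex structure comes from a holomorphic structure. By the Newlander--Nirenberg theorem it suffices to check that $d$ of each $(1,0)$-form has vanishing $(0,2)$-part. One has $d\theta_1=0$ and $d\theta_2=b\,dt\wedge d\bar z+t\,b_z\,dz\wedge d\bar z$; crucially $d\theta_2$ contains no $d\bar t$, precisely because $tb$ is holomorphic in $t$. Hence, writing $\bar\theta_i$ for the complex conjugates, $d\theta_2$ lies in $\Lambda^2$ of the span of $\theta_1,\theta_2,\bar\theta_2$, so it carries no $\bar\theta_1\wedge\bar\theta_2$ term and the $(0,2)$-part is zero. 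Thus $\X$ is a complex surface for which $p:\X\to\Delta$ is a holomorphic submersion with central fiber $X_0=X$; since $X$ is compact, $p$ is proper, and after shrinking $\Delta$ one may equip $\X$ with a Kähler metric so that it fits the framework of Section 2.

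Finally I would check that the Kodaira--Spencer class of this family at $t=0$ is exactly $\mu$. Following Section 2.2, I take the $(1,0)$-lift $V=\partial/\partial t$ of the base field (in the product coordinates this field is annihilated by $d\bar t$ and by $d\bar z+t\bar b\,dz$, hence is of type $(1,0)$ for the structure above) and compute $i_0^*(\dbar V)$. To do this honestly I would introduce a fiberwise holomorphic coordinate $w=w(z,\bar z,t)$ solving the Beltrami equation $w_{\bar z}=tb\,w_z$, chosen to depend holomorphically on $t$ and to reduce to $w=z$ at $t=0$. Writing $V$ in the holomorphic frame $\partial/\partial t|_w,\ \partial_w$ gives $V=\partial/\partial t|_w+w_t\,\partial_w$, whence $\dbar V=(\dbar w_t)\otimes\partial_w$. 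Differentiating the Beltrami equation in $t$ and restricting to $t=0$ yields $\partial_{\bar z}w_t|_{t=0}=b$ and $\partial_w|_{t=0}=\partial_z$, so that the representative is $\kappa_0=b\,d\bar z\otimes\partial_z$ and therefore $\K_0=\mu$.

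I expect the main subtlety to be this last step: one must arrange the fiberwise uniformizing coordinate to vary holomorphically in $t$, which is possible exactly because the Beltrami coefficient $tb$ is holomorphic in $t$ --- otherwise an unwanted $\partial_{\bar t}$ term would spoil both the integrability of $\X$ and the identification of $\kappa_0$. An alternative, more abstract route avoids the explicit computation: for a curve $H^2(X,T^{1,0}(X))=0$, so deformations are unobstructed and the Kuranishi family is a smooth deformation over a ball in $H^1(X,T^{1,0}(X))$ whose Kodaira--Spencer map is the identity; restricting this family to a complex line through the origin in the direction $\mu$ produces the desired fibration. The Beltrami construction above is the explicit incarnation of this deformation.
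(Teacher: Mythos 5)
Your construction is correct, but it is genuinely different from the paper's argument. The paper deduces the lemma in one stroke from Teichm\"uller theory: it invokes the Earle--Eells fibration $\mathcal C_g\rightarrow T_g$, identifies the tangent space of $T_g$ at $[X]$ with $H^{0,1}(X,-K_X)$, and restricts the universal family to a disk through $[X]$ with tangent vector $\mu$ at the origin. You instead build the family by hand: a smooth Beltrami representative $b$ of $\mu$, the almost complex structure on $X\times\Delta$ with $(1,0)$-forms spanned by $dt$ and $dz+tb\,d\bar z$, integrability via Newlander--Nirenberg (correctly reduced to the absence of $d\bar t$ in $d\theta_2$, i.e.\ to holomorphic dependence of $tb$ on $t$), and a direct computation that $i_0^*(\dbar V)=b\,d\bar z\otimes\partial_z$ for the lift $V=\partial/\partial t$, which matches exactly the definition of $\K_0$ used in Section 2.2 of the paper. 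Your verification of the Kodaira--Spencer class is where the paper's route pays off: the identification of the tangent space of $T_g$ with $H^{0,1}(X,-K_X)$ is quoted wholesale, whereas you must produce the fiberwise coordinate $w$ solving $w_{\bar z}=tb\,w_z$ holomorphically in $t$. For this step you appeal implicitly to Ahlfors--Bers; note that you can avoid that citation entirely, since once integrability of the total space is established, Newlander--Nirenberg on the two-dimensional manifold $\X$ already supplies local holomorphic coordinates $(t,w)$ (with $t$ holomorphic because $p$ is), and such a $w$ automatically satisfies the Beltrami equation with holomorphic $t$-dependence --- so your argument is self-contained modulo Newlander--Nirenberg alone. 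What each approach buys: the paper's proof is shorter given the cited machinery and places the disk inside Teichm\"uller space; yours is elementary, explicit, makes the smooth triviality $\X\cong X\times\Delta$ manifest, and, as you note via unobstructedness ($H^2(X,T^{1,0}X)=0$), is the one-dimensional instance of the Kuranishi construction, so it generalizes beyond curves where no Teichm\"uller-theoretic shortcut is available. Two small points to make explicit if you write this up: $\|b\|_\infty$ is well defined because the local coefficient of a Beltrami differential changes by the unimodular factor $\zeta'/\overline{\zeta'}$, and the K\"ahler property of $\X$ (needed for the framework of Section 2) holds after shrinking $\Delta$, e.g.\ by relative ampleness of $K_{\X/\Delta}$ for genus at least $2$.
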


\begin{proof} This is a consequence of basic properties of
  Teichm\"uller space. Teichm\"uller space $T_g$ is a complex manifold
  of dimension $3g-3$ consisting of equivalence classes of Riemann
  surfaces of genus $g$.  Over $T_g$ there is a smooth proper fibration
  $\mathcal C_g\rightarrow T_g$ such that the fiber over a point $t$ in
  $T_g$ is a compact Riemann surface in the class $t$, and all compact
  Riemann
  surfaces of genus $g$ appear as fibers over some point(s) . (This is a
  fundamental result of Earle and Eells, \cite{Earle-Eells}.) The tangent
  space to $T_g$ at $t$ is (isomorphic to) 
$H^{0,1}(\mathcal C_g,  -K_{(\mathcal C_g)_t})$. We need only take a disk in
$T_g$, centered at a point representing $X$ with tangent vector at the
origin equal to $\mu$, and let $\X$ be $\mathcal C_g$ restricted to
that disk.
\end{proof}

\begin{lma}Let $X$ be a compact Riemann surface of genus at least
  2. Then for any $u$ in $H^0(X,K_X)$ there is a nonzero class $\mu$ in $
  H^{0,1}(X,-K_X)$ such that $\mu u=0$ in $H^{0,1}(X)$.
\end{lma}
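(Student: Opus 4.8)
The plan is to package the operation ``multiply by $u$'' as a single $\C$-linear map between two finite dimensional cohomology spaces and then conclude purely by counting dimensions. Fix $u\in H^0(X,K_X)$ and define
$$
m_u:H^{0,1}(X,-K_X)\to H^{0,1}(X),\qquad m_u(\mu)=\mu u,
$$
where $\mu u$ is the product $\mu.u$ described in the introduction: representing $\mu$ locally by $a\,d\bar z\otimes\partial_z$ and writing $u=f\,dz$, one has $\mu u=af\,d\bar z$. Since $\dim_\C X=1$ there are no $(0,2)$-forms, so every such $(0,1)$-form is automatically $\dbar$-closed, and $m_u$ is defined already at the level of forms. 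It descends to cohomology because $u$ is holomorphic: if a representative of $\mu$ is $\dbar g$ for a smooth section $g$ of $-K_X$, then $\mu u=(\dbar g)u=\dbar(gu)$ is $\dbar$-exact, the section $gu$ of $-K_X\otimes K_X=\O$ being a smooth function. Thus $m_u$ is a well defined linear map, and a nonzero element of its kernel is precisely what the lemma requires.

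First I would record the two dimensions. The target $H^{0,1}(X)=H^1(X,\O)$ has dimension $g$. For the source, $-K_X=T^{1,0}(X)$ has degree $2-2g<0$ when $g\geq2$, hence carries no nonzero holomorphic section, so Riemann-Roch gives
$$
\dim_\C H^{0,1}(X,-K_X)=\dim_\C H^1(X,-K_X)=-\bigl((2-2g)-g+1\bigr)=3g-3.
$$
(Equivalently, Serre duality identifies this space with the dual of $H^0(X,2K_X)$, which has dimension $3g-3$.)

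The conclusion is then immediate: for $g\geq2$ we have $3g-3>g$, so $m_u$ maps a space of dimension $3g-3$ into one of strictly smaller dimension $g$ and must have nontrivial kernel, indeed $\dim_\C\ker m_u\geq(3g-3)-g=2g-3\geq1$. Any $\mu\neq0$ in $\ker m_u$ then satisfies $\mu u=0$ in $H^{0,1}(X)$. (If $u=0$ the assertion is trivial, since $H^{0,1}(X,-K_X)\neq0$.)

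I do not expect a genuine obstacle here: the argument is entirely a dimension count, and the hypothesis $g\geq2$ is used in exactly one place, namely to secure the strict inequality $3g-3>g$ (equivalently $g>3/2$) that forces a nonzero kernel. The only things needing care are the elementary checks that $m_u$ is well defined on cohomology classes and that $-K_X$ has negative degree, both routine on a curve of genus at least $2$.
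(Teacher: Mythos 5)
Your proof is correct and is essentially the same as the paper's: the paper also argues that multiplication by $u$ gives a linear map from $H^{0,1}(X,-K_X)$, of dimension $3g-3$ by Riemann--Roch, to $H^{0,1}(X)$, of dimension $g$, which cannot be injective when $g\geq 2$. Your verification that the map descends to cohomology and your handling of the trivial case $u=0$ are fine additions of routine detail; the paper merely supplements the same dimension count with an optional concrete description of kernel elements as currents supported on the zero set of $u$, which you did not need.
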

\begin{proof} This follows from a simple count of dimensions. Given
  $u$ multiplication by $u$ defines a linear map from 
$H^{0,1}(X,-K_X)$ to $H^{0,1}(X)$. Since the dimension of
$H^{0,1}(X,-K_X)$ is $3g-3$ by the Riemann-Roch theorem, while the
dimension of $H^{0,1}(X)$ is $g$, the map cannot be injective if $g$ is
greater than 1. 

This can also be seen more concretely in the following way. (We denote
by $\mu$ also a smooth form representing the cohomology class $\mu$.) That $\mu
u$ vanishes in cohomology means that
$$
\mu u=\dbar v
$$
for some smooth function $v$ . This implies
$$
\dbar \frac{v}{u} =\mu +v \dbar\frac{1}{u}.
$$
Hence $\mu$ is cohomologous to the $\K_X$-valued current $-v \dbar
1/u$, supported in the zero set of $u$. To find a $\mu$ which is not
zero in cohomology, but is annihilated by multiplication by $u$ it
suffices conversely to find a $\K_X$-valued current with measure
coefficients, $\nu$,  supported in the 
zeroset of $u$, which is not exact. For this it is enough to take
$\nu$ supported in one single point. If $\nu=\dbar w$ then $w$ is a
meromorphic section of $-K_X$ with exactly one pole. But such sections
do not exist since the number of zeros minus the number of poles of
any section of $-K_X$ equals the degree of $-K_X$ which is $2-2g<-1$.
\end{proof}

To get curvature that is not only degenerate but vanishes completely,
we need a final  lemma.
\begin{lma}
Let $X$ be a compact Riemann surface. Then  there is a nonzero class $\mu$ in $
  H^{0,1}(X,-K_X)$ such that $\mu u=0$ in $H^{0,1}(X)$ for any $u$ in
  $H^0(X,K_X)$ if and only if $X$ is hyperelliptic of genus at least 3.
\end{lma}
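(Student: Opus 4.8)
The plan is to recast the condition through Serre duality as a statement about the symmetric multiplication map on holomorphic differentials, and then to invoke the theorem of Max Noether. Throughout write
\[
m:\ \mathrm{Sym}^2 H^0(X,K_X)\longrightarrow H^0(X,2K_X)
\]
for the map sending $u\cdot w$ to the product $uw$, a holomorphic quadratic differential. The claim is that the lemma reduces to the assertion that a nonzero $\mu$ of the required kind exists \emph{precisely when} $m$ is not surjective, together with the classical identification of when this happens.

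First I would set up the two relevant dualities: $H^{0,1}(X)$ is dual to $H^0(X,K_X)$, and $H^{0,1}(X,-K_X)$ is dual to $H^0(X,2K_X)$ (here $-K_X$ is the tangent bundle, and Serre duality gives $H^0(X,K_X-(-K_X))=H^0(X,2K_X)$). The key step is the compatibility identity
\[
\langle \mu u,\,w\rangle=\langle \mu,\,uw\rangle,\qquad \mu\in H^{0,1}(X,-K_X),\ u,w\in H^0(X,K_X),
\]
where the left pairing is $H^{0,1}(X)\times H^0(X,K_X)\to\C$ and the right one is $H^{0,1}(X,-K_X)\times H^0(X,2K_X)\to\C$. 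This is just associativity of the product: both sides equal $\int_X \mu\,u\,w$ after the evident contractions, using that a section of $-K_X$ times a section of $K_X$ is a function. Granting this, $\mu u=0$ in $H^{0,1}(X)$ for all $u$ means exactly that $\mu$ annihilates $uw$ for all $u,w$, i.e. annihilates the image of $m$. Since the pairing $H^{0,1}(X,-K_X)\times H^0(X,2K_X)\to\C$ is perfect, a nonzero such $\mu$ exists if and only if $m$ is not surjective.

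It then remains to decide surjectivity of $m$. For genus $0$ or $1$ this is immediate ($H^0(X,2K_X)$ has dimension $0$ or $1$ and $m$ is onto), so no nonzero $\mu$ occurs. On a hyperelliptic curve $y^2=f(x)$ the space $H^0(X,K_X)$ is spanned by $x^k\,dx/y$, $0\le k\le g-1$, so the image of $m$ is spanned by $x^k(dx)^2/y^2$, $0\le k\le 2g-2$, and has dimension $2g-1$. Comparing with $\dim H^0(X,2K_X)=3g-3$ shows $m$ is onto exactly when $g=2$ and fails to be onto precisely when $g\ge 3$; thus for hyperelliptic curves a nonzero $\mu$ exists if and only if $g\ge 3$, as required. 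The remaining case, and the genuinely hard input, is the non-hyperelliptic one: by Max Noether's theorem the canonical curve of a non-hyperelliptic curve of genus $\ge 3$ is projectively normal, so in particular $m$ is surjective and no nonzero $\mu$ exists. Assembling the cases (note that every genus $2$ curve is hyperelliptic, so no case is missed) gives the stated equivalence.

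The main obstacle is precisely this appeal to Noether's theorem. It cannot be avoided by a dimension count, because $\dim\mathrm{Sym}^2 H^0(X,K_X)=\binom{g+1}{2}\ge 3g-3=\dim H^0(X,2K_X)$ for every $g\ge 3$ — with equality at $g=3$ — so the source is always large enough and the hyperelliptic/non-hyperelliptic dichotomy is invisible at the level of dimensions. It is the geometry of the canonical embedding, encoded in Noether's theorem, that forces surjectivity in the non-hyperelliptic case and makes the hyperelliptic curves of genus $\ge 3$ the exact exceptional locus.
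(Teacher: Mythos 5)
Your proof is correct and follows essentially the same route as the paper: Serre duality identifies $H^{0,1}(X,-K_X)$ with the dual of $H^0(X,2K_X)$, so a nonzero $\mu$ killing all products $\mu u$ exists precisely when the multiplication map $\mathrm{Sym}^2H^0(X,K_X)\to H^0(X,2K_X)$ fails to be surjective, which by Max Noether's theorem happens exactly for hyperelliptic curves of genus at least $3$. The only difference is cosmetic: where the paper cites Farkas--Kra for the hyperelliptic failure, you verify it directly with the basis $x^k\,dx/y$ and the count $2g-1<3g-3$ for $g\ge 3$, which is a nice self-contained touch but not a different argument.
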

\begin{proof} That $\mu u=0$ in   $H^{0,1}(X)$ means that the integral
 $$
\int_X (\mu u)\wedge v=0
$$
for any $v$ in   $H^0(X,K_X)$. This means that in the duality pairing
between $ H^{0,1}(X,-K_X)$ and $H^0(X,2K_X)$, $\mu $ is annihilated by
any section of $2K_X$ that factors as a product of two sections of
$K_X$, and also of course by any linear combination of such
sections. On the other hand, $\mu=0$ means that $\mu$ is annihilated
by any section of $2K_X$. The question is therefore if any section of
$2K_X$ is a linear combination of products of sections of $K_X$. By a
theorem of M Noether this is the case if $X$ is not hyperelliptic of
genus greater than 2 ( see \cite{Farkas-Kra} p 149), but not the case
if $X$ is hyperelliptic of genus at least 3 (see \cite{Farkas-Kra} p
98).  
\end{proof}

\begin{prop}

1. Let $X$ be a compact Riemann surface of genus at least
  2. Then there is a smooth proper fibration $\X$ over a disk $\Delta$
  , having
  $X$ as central fiber, such that its Kodaira-Spencer class does not
  vanish at 0, but the curvature of the direct image bundle of
  $K_{\X/\Delta}$ is degenerate (i e not strictly positive) at 0.

2. Assume $X$ is hyperelliptic of genus at least 3. Then there is a
smooth proper fibration $\X$ over a disk $\Delta$ 
  , having
  $X$ as central fiber, such that its Kodaira-Spencer class at 0 does not
  vanish, but the curvature of the direct image bundle of
  $K_{\X/\Delta}$ is zero at 0.
\end{prop}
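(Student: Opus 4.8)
The plan is to derive both statements directly from Theorem 1.1 together with the three lemmas just established; no new geometric construction is required, so the remaining work is purely one of assembly. Recall that in the present situation ($n=1$ and $\L$ trivial) an element $u\in E_0$ is a holomorphic $1$-form on $X$, and the action on $u$ of a representative $\kappa_0$ of the Kodaira-Spencer class is simply the product $\mu u$, a scalar $(0,1)$-form whose class $\K_0.u$ lives in $H^{0,1}(X)$. By Theorem 1.1 we have $\langle\Theta^E u,u\rangle=\|\K_0.u\|^2$, the norm being that of the harmonic representative of the class. Hence the curvature degenerates in the direction $u$ precisely when the cohomology class $\K_0.u$ vanishes, and the curvature vanishes identically at $0$ precisely when $\K_0.u=0$ for every $u\in H^0(X,K_X)$.

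For part 1, I would first fix any nonzero $u\in H^0(X,K_X)$; such a form exists because $\dim H^0(X,K_X)=g\geq 2$. Lemma 3.2 then supplies a nonzero class $\mu\in H^{0,1}(X,-K_X)$ with $\mu u=0$ in $H^{0,1}(X)$. Next I would invoke Lemma 3.1 to construct a smooth proper fibration $\X\rightarrow\Delta$ with central fiber $X$ whose Kodaira-Spencer class at $0$ equals $\mu$. For this fibration $\K_0=\mu\neq 0$, yet $\K_0.u=[\mu u]=0$, so by the reformulation above $\langle\Theta^E u,u\rangle=0$ and the curvature is degenerate at $0$.

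For part 2 the argument is identical except that Lemma 3.2 is replaced by Lemma 3.3. When $X$ is hyperelliptic of genus at least $3$, Lemma 3.3 produces a single nonzero $\mu\in H^{0,1}(X,-K_X)$ that is annihilated in cohomology by multiplication with every $u\in H^0(X,K_X)$. Realizing $\mu$ as $\K_0$ through Lemma 3.1 yields a fibration with $\K_0\neq 0$ but $\K_0.u=0$ for all $u$, whence $\langle\Theta^E u,u\rangle=0$ for every $u\in E_0$ and the curvature vanishes identically at $0$.

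All the substantive content has been absorbed into the lemmas --- Lemma 3.1 (existence of a fibration with prescribed Kodaira-Spencer class, via Teichm\"uller theory) and the cohomological Lemmas 3.2 and 3.3 (the latter resting on Max Noether's theorem) --- so no genuine obstacle remains at the level of the proposition itself. The only point demanding care is to confirm that the scalar product $\mu u$ of the lemmas is exactly the action $\kappa_0.u$ appearing in Theorem 1.1, and that ``$\mu u=0$ in $H^{0,1}(X)$'' means vanishing of the cohomology class $\K_0.u$ rather than a pointwise identity; both are immediate from the description of the Kodaira-Spencer action for $n=1$, $\L=0$ given in Section 2.2.
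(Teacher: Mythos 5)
Your proposal is correct and follows essentially the same route as the paper: Lemma 3.2 (resp.\ Lemma 3.3) supplies the nonzero class $\mu$ annihilated by one (resp.\ every) canonical form, Lemma 3.1 realizes $\mu$ as the Kodaira-Spencer class of a fibration with central fiber $X$, and Theorem 1.1 then gives the degeneracy (resp.\ vanishing) of the curvature. The only difference is that you spell out the bookkeeping --- the identification of the action $\K_0.u$ with the scalar product $\mu u$ for $n=1$, $\L$ trivial, and the distinction between degeneracy in one direction versus vanishing for all $u$ --- which the paper leaves implicit in its three-line proof.
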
 

\begin{proof} 

1. By Lemma 3.2 there is a non zero class $\mu$ in
  $H^{0,1}(X, -K_X)$ that is annihilated by multiplication by a
  canonical form $u$ on $X$. By Lemma 3.1 $\mu$ is the Kodaira-Spencer
  class of some fibration, and by Theorem 1.1, $\Theta^E u=0$. 

2. Follows from Lemma 3.3 in the same way.
\end{proof}

{\bf Remark:} A reader more knowledgeable than the author can probably
compare this section with the discussion of the local period mapping
for Riemann surfaces in \cite{2Griffiths}, p 842-843.
\qed
\subsection{The relation to the Gauss-Manin connection}

A holomorphic $(n,0)$-form on a fiber defines a unique element in $H^n$, so
our  bundle $E$ is a smooth subbundle of $H$. By a theorem of
Griffiths  (that we will reprove below)  $E$ is in fact a holomorphic
subbundle (\cite{Voisin},p 250), if we give $H$ the 
holomorphic structure induced by the Gauss-Manin connection (so that
$\dbar$ on section of $H$ is the $(0,1)$-part of $\D$).

If we have a smooth section of $E$ and $V$ is a vector field on
$\Delta$ we may thus take $D_Vu$, where $D$ is the connection defined
in section 2,  at a point $t$ and obtain a
holomorphic $(n,0)$-form on $X_t$, or we can apply $(\D)_V u$ and obtain a
cohomology class in $H^n(X_t,\C)$. The relation between the two is
given by the next theorem.
\begin{thm}
Let $u$ be a smooth section of $E$. Then
\be
\D u = [D u]-[\K_t.u]dt.
\ee
\end{thm}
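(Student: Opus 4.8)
The plan is to establish the identity
\be
\D u = [D u]-[\K_t.u]dt
\ee
by comparing both sides as cohomology-valued expressions, using a single well-chosen representative $\u$ of the section $u$ on the total space $\X$. Since $\D$ and $D$ are both connections on bundles over the one-dimensional base $\Delta$, it suffices to verify the formula after contracting with the frame field $\partial/\partial t$, so I would compute $\D_{\partial_t} u$ and $[D_{\partial_t} u] - [\K_t.u]$ and match them. The natural strategy is to represent everything through a form $\v$ on $\X$ restricting to $u_t$ on fibers, and then exploit the two different descriptions of the connections: $\D$ is computed by the full exterior derivative $d$ on $\X$ (since $\D$-flat sections are represented by closed forms $f^*(v_0)$), whereas the Chern connection $D$ on $E$ was described in Section 2 via the splitting $\dbar\u = dt\wedge\eta$ and $\partial^\phi\u = dt\wedge\mu$.

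First I would take a representative $\u$ of $u$ and compute $d\u = \partial\u + \dbar\u$ on $\X$. Since $u$ is a holomorphic section of $E$, we have $\dbar\u = dt\wedge\eta$ with $\eta$ of bidegree $(n-1,1)$, and writing $\partial\u = dt\wedge\mu$ (with $\phi=0$ here, so $\partial^\phi = \partial$) captures the $(1,0)$-derivative along the base. The key observation is that the $\D$-derivative $\D_{\partial_t} u$ is the cohomology class of $\delta_{\partial_t}(d\u)$ restricted to the fiber $X_t$, because differentiating the de Rham class amounts to applying $d$ on $\X$ and then contracting out the base direction. Carrying out the contraction, the term $\partial\u = dt\wedge\mu$ contributes $\mu$, whose class is exactly $[Du]$ (recall $(D'u)_t = P(\mu)dt$ and the harmonic/holomorphic part of $\mu$ represents the same cohomology class as $\mu$ modulo exact forms), while $\dbar\u = dt\wedge\eta$ contributes $-\eta$ in the $d\bar t$ slot, and by Lemma 2.2 the class of $\eta$ equals $-\K_t.u$, giving the $-[\K_t.u]dt$ term.

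The main obstacle will be bookkeeping the bidegrees and the $dt$, $d\bar t$ factors correctly so that the $(1,0)$ and $(0,1)$ pieces of $\D u$ land in the right places: the holomorphic part $[Du]dt$ must emerge from the $\partial$-derivative and the antiholomorphic correction $-[\K_t.u]dt$ from the $\dbar$-derivative, with the sign conventions of the excerpt (which differ from \cite{Berndtsson}) tracked carefully through $\delta_{\partial_t}(dt\wedge\eta)$. I would be especially careful that the contraction $\delta_{\partial_t}$ acting on $dt\wedge\eta$ produces $\eta$ with the correct sign, and that passing to cohomology classes on the fiber kills the ambiguity in the choice of representative $\u$ (changing $\u$ by $dt\wedge v$ changes $\eta$ by $\dbar v$ and $\mu$ by $\partial v$, both exact on the fiber). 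Once the representative-independence is confirmed and the two contributions are identified via Lemma 2.2, the identity follows, and as the author notes this simultaneously reproves Theorem 1.1 and shows $E$ is a holomorphic subbundle of $H$ (the holomorphicity being visible from the fact that the $d\bar t$-component of $\D u$ is precisely $-[\K_t.u]dt$'s conjugate-type contribution vanishing on the holomorphic part).
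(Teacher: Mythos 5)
Your overall route---computing $\D u$ directly by applying $d$ to a representative $\u$ and contracting with a lift of $\partial/\partial t$---is genuinely different from the paper's proof. The paper never differentiates a representative against the flat trivialization; instead it pairs $u$ with $\D$-flat sections $v$, represented by \emph{closed} forms $\v$ on $\X$, computes $d\langle u,v\rangle=c_np_*(d\u\wedge\bar\v)$, and reads off $\D'u=[\mu+\eta]\,dt$ and $\D''u=\nu\,d\bar t=D''u$ from the nondegeneracy of the intersection form on $H^n(X_t,\C)$. That duality argument is exactly what lets the paper avoid the step you assert without proof, namely that $\D_{\partial_t}u$ equals the fiber class of $\delta_V(d\u)|_{X_t}$. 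The assertion is true, but in this paper the Gauss--Manin connection is \emph{defined} via the trivialization $e_j(t)=i_t^*\circ f^*(e_j^0)$, so your step needs a bridge: Cartan's formula $L_V\u=\delta_V d\u+d\,\delta_V\u$ for the flow field $V$ of the trivialization (the $d\,\delta_V\u$ term being exact on the fiber), plus a check that replacing $V$ by another lift of $\partial/\partial t$ changes $\delta_V(d\u)$ only by terms vanishing on $X_t$. As written, this missing bridge is the heart of your proposed proof.

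The second, more concrete, gap is your identification of the $\mu$-term. You claim $\mu-P(\mu)$ ``represents the same cohomology class as $\mu$ modulo exact forms.'' For an arbitrary representative this fails at an earlier stage: $\mu$ and $\eta$ are not separately $d$-closed on $X_t$ (only the combination $\mu+\eta$ is, from $d^2\u=0$), so $[\mu]$ and $[\eta]$ do not individually define classes in $H^n(X_t,\C)$; and even where classes make sense, an $(n,0)$-form orthogonal to the holomorphic forms is $\partial$-exact (by the K\"ahler identity $\dbar^*=i[\partial,\Lambda]$) but not in general $d$-exact. The paper resolves precisely this point by invoking Lemma 2.1 at the end of its proof: choose the representative so that $\mu$ is holomorphic on $X_t$ (hence $d$-closed of pure type $(n,0)$ and equal to $D'u/dt$ on the nose) and $\eta$ is primitive (hence, being $\dbar$-closed and primitive, the harmonic representative of $-\K_t.u$ as in the proof of Theorem 1.1, so in particular $d$-closed). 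Only then does the splitting $[\mu+\eta]=[D'u/dt]-[\K_t.u]$ in Hodge components make sense; your proposal never invokes Lemma 2.1. Two smaller points: the theorem is stated for \emph{smooth} sections, and your assumption $\dbar\u=dt\wedge\eta$ drops the term $d\bar t\wedge\nu$, i.e.\ the whole statement $\D''u=\nu\,d\bar t=D''u$ that underlies Griffiths' subbundle theorem (fixable, since the difference of the two sides of the identity is tensorial in $u$, so a holomorphic frame suffices---but this must be said); and your sign bookkeeping is garbled ($\delta_{\partial_t}(dt\wedge\eta)$ restricts to $+\eta$ in the $dt$-component; nothing lands in a $d\bar t$ slot), although the final sign is rescued because Lemma 2.2 gives $[\eta]=-[\K_t.u]$.
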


This statement should be read as follows: The class $[\K_t.u]$ lies in
$H^{n-1,1}$, which by Hodge theory can be thought of as a subspace of
$H^n$. So the right hand side of (4.1) should be read as 'the
$H^n$-class defined by $D u$ minus the $H^n$-class defined by
$[\K_t.u]$, multiplied by $dt$'.

 \begin{proof}Let $v$ be a section of $H$ with $\D
v=0$. We represent $v$ by $\v$, a closed form on $\X$. Then, if $u$ is
a smooth section of $E$, represented by $\u$,
$$
\langle \D u, v\rangle =d\langle u,v\rangle =d p_*(c_n \u\wedge\bar\v)=c_n
p_*(d\u\wedge \bar\v)=c_n 
p_*((\mu+\eta)\wedge \bar\v)dt +p_*(\nu,\bar\v)d\bar t.
$$
Since $\nu$ and $\mu+\eta$ are closed, and the form $\langle, \rangle
$ is nondegenerate on the cohomology groups, we see that $\D'u=[\mu+\eta]
dt$ and $\D''u=\nu d\bar
t=D''u $ . This formula holds for any choice of
representative. By Lemma 2.1, we can always choose our representative
in such a way 
that $\mu$ is holomorphic, and then $\mu dt=D' u$ . By Lemma 1.3,
$[\eta]=-[\K_t.u]$.
This completes the proof of Theorem 1.3. 
\end{proof}
\bigskip

\noindent Formula (4.1) contains many properties of the direct image
bundle $E$. First, we note that, with the convention described above,
$\D''= D''$, i e the $(0,1)$ part of the two connections agree. This
is what lies behind Griffiths theorem, \cite{Voisin} p 250  that $E$ is
a {\it holomorphic} 
subbundle of $H$. Since $\D''^2=0$, the equation implies that
$(D'')^2=0$, so $D''$ 
defines an integrable complex structure on $E$,
\cite{Donaldson-Kronheimer}. Thus $E$ has a (local) 
frame of holomorphic sections which are also holomorphic for the
Gauss-Manin connection, so $E$ is a holomorphic subbundle of $H$.

Looking at the $(1,0)$-part of (4.1), we see that the term $[\K_t.u]$
is 'orthogonal' to the fibers of $E$ for the quadratic form $\langle ,
\rangle$ on $H$, simply for reasons of bidegree. By the Griffiths
formula for the curvature of a holomorphic subbundle (see below) this
means that
\be
\langle \Theta^{GM} u, u\rangle=\langle \Theta^E u, u\rangle +\langle
\K_t.u, \K_t.u\rangle
\ee
for sections of $E$. 
 In the standard situation, when the quadratic form is positive
 definite, this formula implies that the curvature of the holomorphic
 subbundle is smaller than the curvature of $H$. In our  case, the
 form is not positive definite, which changes things
 completely.

Note first that $\K_t.u$ is a primitive class. This follows from Lemmas
2.2 and 2.1; by Lemma 2.2 $-\K_t.u$ is cohomologous to $\eta$ which is
primitive by (the proof of) Lemma 2.1.

This implies that the second term on the right hand
 side of (3.4) is equal to the negative of the norm of the cohomology
 class with respect to any Kähler metric we choose on the fiber. (This
 quantity is independent of the 
 choice of Kähler metric.)  Since
 we also have 
 that $\Theta^{GM}=0$, (3.4) implies
$$
\langle \Theta^E u, u\rangle= \|\K_t.u\|^2,
$$
so we get another proof of Theorem 1.1

{\bf Remark:} Formula (3.4) can be obtained as follows. Choose a
holomorphic section of $E$, such that $D'u=0$ at a given point. Then,
at that point,
\be
\Delta\langle u,u\rangle =-\langle \Theta^E u, u\rangle.
\ee
On the other, we can also compute the Laplacian using the Gauss-Manin
connection:
$$
\Delta\langle u,u\rangle =-\langle \Theta^{GM} u, u\rangle +\langle \D'
u, \D'u\rangle =\langle
\K_t.u, \K_t.u\rangle,
$$
by (3.3). Comparing these two formulas we get (3.4).\qed

\section{The twisted case}

In this section we consider a semiample line bundle $\L$ over $\X$,
equipped with a semipositive metric $\phi$ which is assumed to be
strictly positive when restricted to any fiber. We can then take our
underlying Kähler metric to be
$$
i\ddbar (\phi +|t|^2)=:\omega,
$$
so in particular $\omega$ restricts to $i\ddbar\phi$ on any fiber.  
As in the introduction, we choose local coordinates $(t,z)$ on $\X$
that respect the fibration, a local representative $\varphi$ of the
metric $\phi$ with respect to some trivialization of $\L$,  and define
a local vector field $W_\varphi$ as 
the complex gradient of the local function $\dof$. We then let
$$
V_\phi:= \frac{\partial}{\partial t} - W_\varphi.
$$
By a theorem of Schumacher, \cite{Schumacher}, the field $V_\phi$ is
globally defined on $\X$. This is also a consequence of the following
lemma, that we will use repeatedly.
\begin{lma}
\be
\delta_{V_\phi}\ddbar\phi=c(\phi)d\bar t.
\ee
(The function $c(\phi)$ is defined in (1.2) and (1.3).)
\end{lma}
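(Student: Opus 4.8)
The plan is to prove the identity by a direct computation in local coordinates $(t,z)$ respecting the fibration, in which $\phi$ is represented by $\varphi$ and $V_\phi=\partial/\partial t-W_\varphi$. First I would write out the $(1,1)$-form $\ddbar\varphi$ according to its bidegree in the base versus the fiber directions,
\[
\ddbar\varphi=\varphi_{t\bar t}\,dt\wedge d\bar t+\varphi_{t\bar j}\,dt\wedge d\bar z_j+\varphi_{i\bar t}\,dz_i\wedge d\bar t+\varphi_{i\bar j}\,dz_i\wedge d\bar z_j,
\]
with subscripts denoting partial derivatives and summation over the repeated fiber indices $i,j$ understood. Since $V_\phi$ is of type $(1,0)$, the interior product $\delta_{V_\phi}$ kills $d\bar t$ and $d\bar z_j$ and contracts only the holomorphic differentials, so $\delta_{V_\phi}\ddbar\varphi$ is automatically a $(0,1)$-form, a combination of $d\bar t$ and of the $d\bar z_j$. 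It then suffices to identify these two groups of coefficients, and the whole content of the lemma is that the $d\bar z_j$-part vanishes while the $d\bar t$-part is $c(\phi)$.

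Next I would split $\delta_{V_\phi}=\delta_{\partial/\partial t}-\delta_{W_\varphi}$ and read off the $d\bar z_j$-coefficient. The $\partial/\partial t$-part contributes $\varphi_{t\bar j}$, while the $W_\varphi$-part is the contraction of $W_\varphi$ against the fiber block $\varphi_{i\bar j}dz_i\wedge d\bar z_j$. The crucial observation is that this is exactly the defining relation for the complex gradient: $\delta_{W_\varphi}i(\ddbar)_z\varphi=\dbar_z\dof$ says precisely that the $W_\varphi$-contraction of the fiber block reproduces $\dbar_z\dof=\varphi_{t\bar j}d\bar z_j$. Hence the $d\bar z_j$-coefficients cancel and no fiber-direction term survives; this is the step that forces $\delta_{V_\phi}\ddbar\phi$ to be a pure multiple of $d\bar t$, and it is the whole reason $W_\varphi$ was chosen as it was.

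It then remains to compute the $d\bar t$-coefficient, which is $\varphi_{t\bar t}-W^i\varphi_{i\bar t}$ if $W_\varphi=W^i\partial/\partial z_i$. The first term is $\partial\dof/\partial\bar t$. For the second I would use the complex-gradient identity a second time: $W^i$ is the fiber-metric dual of the $(0,1)$-form $\dbar_z\dof$, so $W^i\varphi_{i\bar t}$ equals the fiber inner product $\langle\dbar_z\dof,\dbar_z\dof\rangle$ measured in $\omega^t=i\ddbar\varphi|_{X_t}$, that is $|\dbar_z\dof|^2$. This gives $\varphi_{t\bar t}-|\dbar_z\dof|^2=c(\phi)$, which is the definition (1.2), and hence the identity. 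The computation itself is routine; the only real obstacle is the bookkeeping of the factors of $i$ and of the inverse-metric index conventions in these two applications of the gradient equation, and in particular making the duality $W^i\varphi_{i\bar t}=|\dbar_z\dof|^2$ precise. Once that is pinned down the lemma is proved, and since its right-hand side $c(\phi)d\bar t$ is globally defined, the same computation confirms that $V_\phi$ itself is independent of the coordinate and trivialization choices, reproving Schumacher's fact that it is a global field on $\X$.
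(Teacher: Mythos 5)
Your proof is correct and takes essentially the same route as the paper's: the paper likewise decomposes $\ddbar\varphi$ into its fiber block, the two mixed blocks $dt\wedge\dbar_z\dof$ and $\partial_z\bar{\dof}\wedge d\bar t$, and the $dt\wedge d\bar t$ term, then contracts with $V_\phi=\partial/\partial t-W_\varphi$, using the defining relation of the complex gradient once to cancel the fiber-direction part and once (in the form $\delta_{W_\varphi}\partial_z\bar{\dof}=W^i\varphi_{i\bar t}=|\dbar_z\dof|^2$) to evaluate the $d\bar t$-coefficient as $c(\phi)$. The only difference is notational (explicit indices versus the paper's block form), and your index bookkeeping is right.
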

\begin{proof}
Recall that 
$$
\dof=\frac{\partial\varphi}{\partial t}
$$
and that the local vector field $W_\varphi$ was defined by
$$
\delta_{W_\varphi}\omega=\dbar\dof
$$
i e
$$
\delta_{W_\varphi}(\ddbar\varphi)_z=\dbar\dof
$$
on fibers. We have
$$
\ddbar\varphi=(\ddbar\varphi)_z +dt\wedge\dbar_z\dof
+\partial_z\bar{\dof}\wedge d\bar t +\frac{\partial^2\varphi}{\partial
  t\partial\bar t} dt\wedge d\bar t.
$$
Contracting with $V_\varphi$ we get
$$
\delta_{V_\varphi}\ddbar\varphi=-\delta_{W_\varphi}(\ddbar\varphi)_z +\dbar\dof
-|\dbar\dof|^2d\bar t +\frac{\partial^2\varphi}{\partial
  t\partial\bar t}d\bar t= (\frac{\partial^2\varphi}{\partial
  t\partial\bar t}-|\dbar\dof|^2)d\bar t.
$$
By formula (1.2) this equals $c(\varphi)d\bar t$.

\end{proof} 
The lemma
shows in particular that, when $\phi$ is relatively positive,
$V_\phi$ is globally well defined. This is 
so, because either $c(\phi)$ is nonzero, and then (4.1) determines
$V_\phi$ directly, or $i\ddbar\phi$ has one zero eigenvalue. Then
(4.1) shows that $V_\phi$ lies in the eigenspace corresponding to the
eigenvalue zero. This determines $V_\phi$ up to a multiplicative
constant, which must be the same for all local definitions, since $dp$
maps $V_\phi$ to 
$\partial/\partial t$. 

Let $u=u_t$ be a holomorphic section of $E$. Then $u\wedge dt$ is a
holomorphic $(n+1,0)$-form on $\X$, with values in $\L$. We now choose
a representative of $u$ by,
\be
\u:= \delta_{V_\phi}(dt\wedge u).
\ee
This choice of $\u$ is the main point of the argument. If with respect
to our local coordinates
$$
u=u^0=\hat u^0 dz,
$$
where $\hat u^0$ is a function, we get
$$
\u=u^0+dt\wedge v,
$$
with 
$$
v= \delta_{W_\phi} u^0
$$
on fibers. 

Hence $\eta$, defined by $\dbar\u=dt\wedge\eta$ is given
by
$$
\eta=-\dbar v =-k^\phi_t.u^0
$$
on fibers. 
Notice that since on fibers
$$
0=\delta_{W_\phi}(i\ddbar\phi\wedge u^0)=i\dbar\dof\wedge
u^0+i\ddbar\phi\wedge v,
$$
we get
$$
\ddbar\phi\wedge v= -\dbar\dof\wedge u^0.
$$
It follows that
$$
\eta\wedge\omega=\dbar^2\dof\wedge u^0=0,
$$
so $\eta$ is primitive on fibers. Since $\eta$ is primitive we get
for the fiberwise Hodge *-operator that
 $*\eta=-\eta$, so 
since $\eta$ is always $\dbar $-closed on fibers, $\dbar*\eta=0$ too.

\bigskip

\noindent Recall that $\partial^\phi\u=dt\wedge\mu$.
Fix a point in $\Delta$ that we take to be 0. Assume the section
satisfies $D'u=0$ at 0. Since $D' u=P(\mu) dt$ this means that $\mu$
is orthogonal to the space of holomorphic forms. By the curvature formula (2.1)
\be
\langle \Theta^E u_0,u_0\rangle_0= \|\eta\|^2_0-\|\mu\|^2_0 +
p_*(c_n i\ddbar\phi\wedge\u\wedge\bar\u)/dV_t 
\ee
since $\eta$ is primitive and $D'u=0$. Here we have an apparently
negative contribution from the norm of $\mu$, but we know from the
general curvature formula in section 2.1 that it must be possible to
aborb it in the norm squared of $\eta$.
\begin{lma}
\be
\ddbar\phi\wedge\u=c(\phi)\u\wedge dt\wedge d\bar t.
\ee
\end{lma}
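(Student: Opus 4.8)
The plan is to exploit a pure degree obstruction on the total space $\X$, which has complex dimension $n+1$, together with the contraction identity of Lemma 4.1. First I would set $\Omega:=dt\wedge u$, the holomorphic $(n+1,0)$-form on $\X$ whose interior product with $V_\phi$ defines our chosen representative, so that $\u=\delta_{V_\phi}\Omega$ by (4.2). The key observation is that $\ddbar\phi\wedge\Omega$ has bidegree $(n+2,1)$; since $\dim_{\C}\X=n+1$ there are no nonzero forms of holomorphic degree $n+2$, and therefore $\ddbar\phi\wedge\Omega=0$ identically on $\X$.

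Next I would contract this vanishing identity with $V_\phi$ and use that the interior product is an antiderivation of degree $-1$. Because $\ddbar\phi$ has even total degree, the Leibniz rule gives
$$
0=\delta_{V_\phi}\bigl(\ddbar\phi\wedge\Omega\bigr)=(\delta_{V_\phi}\ddbar\phi)\wedge\Omega+\ddbar\phi\wedge\delta_{V_\phi}\Omega.
$$
Inserting $\delta_{V_\phi}\ddbar\phi=c(\phi)\,d\bar t$ from Lemma 4.1, and $\delta_{V_\phi}\Omega=\u$ from (4.2), this becomes
$$
\ddbar\phi\wedge\u=-c(\phi)\,d\bar t\wedge dt\wedge u.
$$

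Finally I would reorganize the factors by the standard sign rules. Since $d\bar t\wedge dt=-dt\wedge d\bar t$, and commuting the degree-$n$ form $u$ past $dt\wedge d\bar t$ costs the sign $(-1)^{2n}=1$, the right hand side equals $c(\phi)\,u\wedge dt\wedge d\bar t$. Recalling that $\u=u+dt\wedge v$, the component $dt\wedge v$ is annihilated upon wedging with $dt$, so $\u\wedge dt\wedge d\bar t=u\wedge dt\wedge d\bar t$, and we obtain $\ddbar\phi\wedge\u=c(\phi)\,\u\wedge dt\wedge d\bar t$, which is (4.4). There is no genuine analytic obstacle here: the entire conceptual content sits in the degree-count vanishing $\ddbar\phi\wedge(dt\wedge u)=0$ and in Lemma 4.1, so the only point requiring care is the bookkeeping of the interior-product signs and the verification that the $dt\wedge v$ part of $\u$ drops out.
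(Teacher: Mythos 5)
Your proof is correct and is essentially identical to the paper's: the paper likewise starts from the degree-count vanishing $\ddbar\phi\wedge dt\wedge\u=0$ (which is your $\ddbar\phi\wedge(dt\wedge u)=0$, since the $dt\wedge v$ part of $\u$ dies against $dt$), contracts with $V_\phi$ using the antiderivation rule, and invokes Lemma 4.1 via $\delta_{V_\phi}\ddbar\phi=c(\phi)\,d\bar t$ together with $\delta_{V_\phi}(dt\wedge u)=\u$. The only difference is that you spell out the sign bookkeeping and the step $\u\wedge dt\wedge d\bar t=u\wedge dt\wedge d\bar t$, which the paper leaves implicit.
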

\begin{proof} For degree reasons $\ddbar\phi\wedge dt\wedge \u=0$ so
$$
0=\delta_V(\ddbar\phi\wedge dt\wedge \u)=c(\phi) d\bar t\wedge
dt\wedge\u+\ddbar\phi\wedge \u.
$$
\end{proof}
From this, the next lemma  follows immediately.

\begin{lma} At $t=0$
\be
p_*(c_n i\ddbar\phi\wedge\u\wedge\bar\u)/dV_t
=\int_{X_0}c(\phi)|u|^2e^{-\phi}.
\ee
\end{lma}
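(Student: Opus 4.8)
The plan is to obtain the identity as an immediate consequence of the pointwise relation in Lemma 4.2, the analytic content having already been absorbed there; what remains is a reordering of wedge factors together with the interpretation of the pushforward as integration over the fibre.

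First I would insert the identity (4.4) of Lemma 4.2, namely $\ddbar\phi\wedge\u=c(\phi)\,\u\wedge dt\wedge d\bar t$, into the integrand. Since $c(\phi)$ and $\ephi$ are (globally defined) scalar functions they factor freely through the wedge products, so
$$
c_n\,i\ddbar\phi\wedge\u\wedge\bar\u\,\ephi=c_n\,i\,c(\phi)\,\ephi\,\u\wedge dt\wedge d\bar t\wedge\bar\u .
$$
I would then move the $n$-form $\bar\u$ to the left past the $2$-form $dt\wedge d\bar t$; this costs the sign $(-1)^{2n}=1$, so the right-hand side equals $c(\phi)\,\ephi\,(c_n\u\wedge\bar\u)\wedge(i\,dt\wedge d\bar t)=c(\phi)\,\ephi\,(c_n\u\wedge\bar\u)\wedge dV_t$.

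Next I would restrict to the fibre. The key observation is that once the factor $dt\wedge d\bar t$ has been split off, only the part of $\u$ tangential to $X_0$ contributes: writing $\u=u^0+dt\wedge v$ as in (4.2), the component $dt\wedge v$ is annihilated upon wedging with $dt$, so that $c_n\u\wedge\bar\u\wedge dt\wedge d\bar t=c_n\,u^0\wedge\overline{u^0}\wedge dt\wedge d\bar t$, and $u^0$ restricts to $u$ on $X_0$. Hence the restriction of $c_n\u\wedge\bar\u\,\ephi$ to the fibre is precisely $|u|^2\ephi$ in the sense of section 2.1. Recalling that the pushforward of a top-degree form of the shape $(\text{fibre }(n,n)\text{-form})\wedge dt\wedge d\bar t$ integrates the fibre form over $X_t$, and that $dV_t=i\,dt\wedge d\bar t$, dividing by $dV_t$ gives exactly $\int_{X_0}c(\phi)\,|u|^2\ephi$.

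I do not anticipate a genuine obstacle, as all the analysis sits in Lemma 4.2; the only points needing care are the sign bookkeeping in the reordering and the verification that the vertical component $dt\wedge v$ of the representative drops out after wedging with $dt\wedge d\bar t$, so that it is $u$ itself, and not the full representative, that enters the fibre integral.
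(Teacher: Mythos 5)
Your proof is correct and is essentially the paper's argument: the paper offers no separate proof, stating only that the lemma ``follows immediately'' from the preceding lemma $\ddbar\phi\wedge\u=c(\phi)\,\u\wedge dt\wedge d\bar t$, and the steps you supply --- the sign-free reordering of $\bar\u$ past $dt\wedge d\bar t$, the observation that the $dt\wedge v$ component of the representative is killed upon wedging with $dt\wedge d\bar t$ so that only $u$ itself enters, and the reading of $p_*$ as fibre integration against $dV_t=i\,dt\wedge d\bar t$ --- are exactly the routine verifications the paper suppresses.
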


\begin{lma}On fibers
\be
\dbar\mu=-\partial_z^\phi\eta.
\ee
\end{lma}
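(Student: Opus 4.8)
The plan is to obtain (4.6) from the curvature identity for the Chern connection on $\L$ together with Lemma 4.3. Recall that in a local trivialization $\partial^\phi = e^\phi\partial\, \ephi = \partial - \partial\phi\wedge\,\cdot$ is the $(1,0)$-part of the Chern connection on $\L$, whose $(0,1)$-part is $\dbar$. The one fact I will use is that the curvature of this connection is multiplication by $\ddbar\phi$, i.e.
\[
\dbar\partial^\phi + \partial^\phi\dbar = \ddbar\phi\wedge\,\cdot
\]
on $\L$-valued forms on $\X$. This is a one-line check: expanding $\partial^\phi = \partial - \partial\phi\wedge\,\cdot$ and using $\partial^2=\dbar^2=\partial\dbar+\dbar\partial=0$, all terms cancel except $-\dbar\partial\phi\wedge\,\cdot=\ddbar\phi\wedge\,\cdot$.

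First I would apply this identity to the representative $\u$ defined in (4.2). Using $\partial^\phi\u=dt\wedge\mu$, $\dbar\u=dt\wedge\eta$, and Lemma 4.3 in the form $\ddbar\phi\wedge\u=c(\phi)\,\u\wedge dt\wedge d\bar t$, the identity reads
\[
\dbar(dt\wedge\mu)+\partial^\phi(dt\wedge\eta)=c(\phi)\,\u\wedge dt\wedge d\bar t .
\]
A short Leibniz computation, using $\dbar\,dt=\partial\,dt=0$, rewrites the left-hand side as $-\,dt\wedge(\dbar\mu+\partial^\phi\eta)$, so that
\[
-\,dt\wedge(\dbar\mu+\partial^\phi\eta)=c(\phi)\,\u\wedge dt\wedge d\bar t .
\]

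Finally I would restrict to a fiber. Writing $\u=u^0+dt\wedge v$ gives $\u\wedge dt=u^0\wedge dt$, so the right-hand side is a multiple of $dt\wedge(u^0\wedge d\bar t)$; rearranging shows that $\dbar\mu+\partial^\phi\eta+(-1)^n c(\phi)\,u^0\wedge d\bar t$ is a multiple of $dt$. Pulling back by the inclusion $i_t\colon X_t\to\X$ then kills every term containing $dt$ or $d\bar t$: the term $u^0\wedge d\bar t$ drops out because $i_t^*(d\bar t)=0$, while $i_t^*$ commutes with $\dbar$ and carries $\partial^\phi$ to the fiberwise operator $\partial_z^\phi$ (since $i_t^*(\partial\phi)=\partial_z\varphi$). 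What remains is $\dbar\mu+\partial_z^\phi\eta=0$ on $X_t$, which is exactly (4.6).

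The only genuinely delicate point is this last step: the computed identity lives on $\X$ and carries spurious $dt$- and $d\bar t$-terms, so one must argue that they disappear under restriction to the fiber rather than trying to \emph{divide} by $dt$ directly; in particular the sign $(-1)^n$ is immaterial since it multiplies a term annihilated by $i_t^*$. Everything else is the standard curvature identity and routine sign bookkeeping.
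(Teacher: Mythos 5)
Your proof is correct and takes essentially the same route as the paper: the paper likewise applies the curvature identity $\dbar\partial^\phi+\partial^\phi\dbar=\ddbar\phi\wedge\,\cdot$ to the representative $\u$, uses $\partial^\phi\u=dt\wedge\mu$, $\dbar\u=dt\wedge\eta$, and the lemma $\ddbar\phi\wedge\u=c(\phi)\,\u\wedge dt\wedge d\bar t$ to kill the curvature term on fibers, and then restricts to $X_t$. Your careful handling of the spurious $dt$- and $d\bar t$-terms under $i_t^*$ simply makes explicit what the paper's restriction step leaves implicit.
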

\begin{proof}
By definition
$$
\dbar\u= dt\wedge\eta.
$$
Hence
$$
\partial^\phi\dbar\u=- dt\wedge\partial^\phi\eta.
$$
But the left hand side here is
$$
-\dbar\partial^\phi\u-\ddbar\phi\wedge\u,
$$
and $\ddbar\phi\wedge\u$ vanishes on fibers by Lemma 5.2. Hence
$$
\dbar (dt\wedge \mu)= dt\wedge\partial^\phi\eta,
$$
which proves the lemma.

\end{proof}

Since we have also seen that $\mu$ is orthogonal to holomorphic forms,
$\mu$ is the $L^2$-minimal
solution to
$$
\dbar\mu=-\partial^\phi\eta=-\nabla'\eta
$$
on $X_0$. Hence 
$$
\mu=-\dbar^*(\Box'')^{-1}\nabla'\eta.
$$
Here $\nabla'$ is the $(1,0)$-part of the Chern connection for $\L$
restricted to $X_0$, 
$$
\Box''= \dbar\dbar^*+\dbar^*\dbar,
$$
and we will also use 
$$
\Box'=\nabla'(\nabla')^*+(\nabla')^*\nabla'.
$$
Then, on $(n+1)$-forms $\Box'=\Box''+1$ by the Kodaira-Nakano
formula. Hence

$$
\|\mu\|^2=\langle -\dbar^*(\Box'')^{-1}\nabla'\eta,\mu\rangle =\langle 
(\Box'')^{-1}\nabla'\eta,\nabla'\eta\rangle=
$$

$$
= \langle(\Box'+1)^{-1}\nabla'\eta,\nabla'\eta\rangle=
\langle(\Box'+1)^{-1}\eta,(\nabla')^*\nabla'\eta\rangle,
$$
since $\nabla'$ commutes with $(\Box'+1)^{-1}$. 
But $(\nabla')^*\eta=*\dbar*\eta=0$ so
$$
(\nabla')^*\nabla'\eta=\Box'\eta.
$$
Hence
$$
\|\eta\|^2-\|\mu\|^2=
\|\eta\|^2-\langle(\Box'+1)^{-1}\eta,\Box'\eta\rangle =
\langle(\Box'+1)^{-1}\eta,\eta\rangle.
$$
Inserting this in (4.3), using lemma 4.3, we get Theorem 1.2.

\bigskip{\bf Remark:}  The last part of the proof amounts to a
calculation of
$$
\|\eta\|^2-\|\mu\|^2,
$$
 if $\mu$ is the
$L^2$-minimal solution to
$$
\dbar\mu=\partial^{\phi}\eta,
$$
showing in particular that this quantity is nonnegative.
This is formally similar to a classical $L^2$-estimate for the
Beurling transform of a function in the plane. It is different from our
earlier expression for the curvature in the case of a trivial
fibration, \cite{2Berndtsson}, where we estimated instead the
$L^2$-minimal solution to the equation
$$
\dbar u=\dbar\dof\wedge u.
$$
The two expressions turn out to be identical for a trivial fibration,
but the formula from \cite{2Berndtsson} can not be used here, since
$\dbar\dof$ has no global meaning.\qed

\subsection{Infinitesimal triviality}

\noindent We see from Theorem 1.2 that if $\Theta^E u=0$ for {\it
  some} $u$ in $E^0$, then $c(\phi)=0$ on $X_0$ and $\eta=-k_0^\phi.u=0$
so $k_0^\phi=0$ on $X_0$. This last condition says that $V_\phi$ is
holomorphic along $X_0$. By Lemma 2.5 in \cite{2Berndtsson}, it also
follows from $c(\phi)=0$ that
$$
\frac{\partial}{\partial\bar t}|_{t=0}V_\phi=0,
$$
so $V_\phi$ is holomorphic to first order also in directions tranverse
to the fiber. If moreover $\Theta^E$ is degenerate not only for $t=0$
but for all $t$ in $\Delta$, then $V_\phi$ is holomorphic on $\X$.
To get cleaner statements let us assume that $V_\phi$ is holomorphic
on $\X$ in the sequel.
 We
shall then  see that $V_\phi$ lifts to a holomorphic vector field on
$\L$,  $\hat V_\phi$, the flow of which acts linearily on the fibers of
$\L$ and moreover satisfies
$$
\hat V_\phi |\xi|_\phi^2=0.
$$
 
\begin{lma} Assume $\Theta^E=0$ (or is just degenerate) on $\Delta$. Then,
near any point in $\X$ there is a local trivialization of $\L$ with
respect to
which the metric $\phi$ is represented by a local function $\varphi$
such that $V_\phi(\varphi)=0$.
\end{lma}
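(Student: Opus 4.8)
The plan is to use the only freedom available: the local weight $\varphi$ representing $\phi$ is determined only up to an additive pluriharmonic function $f+\bar f$ (with $f$ holomorphic), since such a change corresponds to multiplying the holomorphic frame of $\L$ by the nonvanishing holomorphic function $e^{-f}$, and every local pluriharmonic function is of this form. Thus it suffices to produce, near the given point, a holomorphic $f$ with $V_\phi(\varphi+f+\bar f)=0$. Because $V_\phi$ is of type $(1,0)$ we have $V_\phi(\bar f)=0$, while $V_\phi(f)$ is holomorphic (both $V_\phi$ and $f$ being holomorphic). Hence the problem collapses to solving the single scalar equation
\be
V_\phi(f)=-V_\phi(\varphi)
\ee
with $f$ holomorphic.

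The key step is that the right-hand side $h:=V_\phi(\varphi)$ is itself holomorphic, so that both the unknown and the datum in the equation are holomorphic. Writing $h=\delta_{V_\phi}\ds\varphi$ and applying the Leibniz rule for $\dbar$ and contraction from the proof of Lemma 2.2, together with $\dbar V_\phi=0$ (our standing assumption that $V_\phi$ is holomorphic) and $\dbar\ds\varphi=-\ddbar\varphi$, one obtains
\be
\dbar h=\delta_{V_\phi}\ddbar\varphi=c(\phi)\,d\bar t,
\ee
the last equality being Lemma 4.1 (only the vanishing will matter, so the precise sign is immaterial). But degeneracy of $\Theta^E$ forces $c(\phi)=0$, so $\dbar h=0$ and $h$ is holomorphic, as wanted.

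It remains to solve the holomorphic transport equation with holomorphic datum $h$. Here I use that $V_\phi$ never vanishes, since $dp(V_\phi)=\ds/\ds t$; hence by the holomorphic flow-box theorem there are local holomorphic coordinates $(w_1,\dots,w_{n+1})$ near the point in which $V_\phi=\ds/\ds w_1$. The equation then becomes $\ds f/\ds w_1=-h$, which I integrate in $w_1$ to obtain a holomorphic $f$. Changing the trivialization of $\L$ by this $f$ produces a new local weight $\varphi'=\varphi+f+\bar f$, which represents the same metric (as $i\ddbar(f+\bar f)=0$) and satisfies $V_\phi(\varphi')=h+V_\phi(f)+V_\phi(\bar f)=0$, proving the lemma. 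The one genuinely substantive point is the holomorphicity of $h$; once $c(\phi)=0$ and $\dbar V_\phi=0$ are in force this is immediate from Lemma 4.1, and the transport step is then routine precisely because $V_\phi$ is nowhere zero.
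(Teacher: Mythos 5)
Your proof is correct and follows essentially the same route as the paper's: both arguments first show that $h:=V_\phi(\varphi)=\delta_{V_\phi}\partial\varphi$ is holomorphic by applying $\dbar$, using $\dbar V_\phi=0$ together with Lemma 4.1 and the degeneracy hypothesis $c(\phi)=0$, and then remove $h$ by the pluriharmonic correction $f+\bar f$ (the paper's $-2\Re\Gamma$ with $\Gamma=-f$). Your flow-box integration merely spells out the step the paper asserts without proof, namely that one can locally solve $V_\phi(\Gamma)=\gamma$ with $\Gamma$ holomorphic, which is indeed justified exactly as you say by the nonvanishing of $V_\phi$ (since $dp(V_\phi)=\partial/\partial t$).
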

\begin{proof}Let $\varphi$ be any local function, representing $\phi$
  in some holomorphic frame. Any other representative near the point
  is obtained by 
  subtracting a pluriharmonic function.
By definition 
$$
V_\phi(\varphi)=\delta_{V_\phi}\partial\varphi.
$$
Taking $\dbar$ we get, since $V_\phi$ is holomorphic,
$$
\dbar V_\phi(\varphi)=-\delta_{V_\phi} \dbar\partial\varphi.
$$
By Lemma 5.1 this equals $c(\phi) d\bar t=0$. Hence
$V_\phi(\varphi):=\gamma$ is holomorphic. Locally, we can write $\gamma=
V_\phi(\Gamma)$, with $\Gamma$ holomorphic, and then it is enough to
replace $\varphi$ by $\varphi-2\Re \Gamma$.
\end{proof}

By the lemma we get a covering of $\X$ by open sets $U_i$ over which
there are local frames $e_i$ of $\L$, such that 
$$
V_\phi\log |e_i|_\phi=0.
$$
Then the transition functions $g_{i j}=e_i/e_j$ satisfy
$$
V_\phi\log |g_{i j}|^2=0
$$
so
$$
V_\phi g_{ i j}=0.
$$
We can now define $\hat V_\phi$, by letting it be horizontal with
respect to these local frames.  

\subsection{The Weil-Petersson metric.}

In this subsection we will rewrite formula (1.4) in the case when the
fibers are Riemann surfaces, $\L$ is equal to the relative canonical
bundle $K_{\X/Y}$  and the metric $\phi$ is Kähler-Einstein
on each fiber. We shall see that Theorem 1.2  in this case, together
with results 
 of Schumacher, \cite{Schumacher} implies  the formula of Wolpert,
\cite{Wolpert}, for the curvature of the Weil-Peterson metric.

Notice that in this case, $k_t^\phi= i^*_t(\dbar V_\phi)$ is a
$(0,1)$-form on the fiber $X_t$, with values in $T^{1,0}(X_t)=
-K_{X_t}$. This means that the pointwise norm squared
$$
|k_t^\phi|^2
$$
is a well defined function on $X_t$. By Schumacher's formula,
\cite{Schumacher} Proposition 2, \cite{2Schumacher} Proposition 3, it
is related
to the function $c(\phi)$ by 
$$
c(\phi)= (1+\Box)^{-1}|k_t^\phi|^2.
$$
(Schumacher proves the same formula in any dimension if
$\phi$ is Kähler-Einstein on fibers.) Schumacher also proves that the
form $k_t^\phi$ is harmonic on $X_t$. Moreover, the holomorphic
section $u$ of $K_{X_t}+\L=2K_{X_t}$ is Hodge dual to another harmonic
$(0,1)$-form with values in $-K_{X_t}$
$$
k_t^u:= \bar u_t e^{-\phi}.
$$
The first term in the right hand side of (1.4) can therefore be rewritten as
\be
\int_{X_t}(1+\Box)^{-1}\left(|k_t^\phi|^2\right)\cdot |k_t^u|^2 e^\phi.
\ee
The second term in the right hand side of (1.4) depends on a
$K_{X_t}$-valued $(0,1)$-form 
$$
\eta= k_t^\phi u_t= k_t^\phi \overline{k^u_t}e^{\phi}.
$$
It is easy to check that if $\xi$ is any such form, $\xi e^{-\phi}$ is a
function. The $\Box'$-Laplacian of $\xi$ and the
$\Box$-Laplacian on functions are related by
$$
\Box (\xi e^{-\phi})= e^{-\phi}\,\Box'\xi.
$$
Hence
$$
(1+\Box')^{-1}\eta= 
e^{\phi}(1+\Box)^{-1}( k_t^\phi\overline{k^u_t}). 
$$
Therefore the second term in (1.4) equals
$$
\int_{X_t}(1+\Box)^{-1}( k_t^\phi\overline{k^u_t})
\cdot ( k_t^u\overline{k^\phi_t})e^{\phi},
$$
so altogether
\be
\langle\Theta^E u,u\rangle =
 \int_{X_t}\left((1+\Box )^{-1}|k_t^\phi|^2\cdot
   |k_t^u|^2 + (1+\Box)^{-1}( k_t^\phi\overline{k^u_t})
\cdot ( k_t^u\overline{k^\phi_t})
    \right) e^\phi
\ee

Let us now compare this to Wolpert's formula. Let $u_1, ...u_N$ be a
basis for $H^0(X_0, 2K_{X_0})$ and consider a fibration over the
polydisk $\Delta^N$. We can then perform the earlier construction with
respect to each of the coordinates $t_i$, and obtain corresponding
$-\K_{X_0}$-values $(0,1)$-forms 
$$
k_{t_i}^\phi=:B_i.
$$
Then put (following \cite{Yau et al})
$$
f_{i \bar j}= B_i \bar B_j
$$
and 
$$
e_{i \bar j}=(1+\Box)^{-1}f_{i \bar j}.
$$
We now assume the fibration is such that the Hodge dual of $B_i$ is
$u_i$, so that the fibration contains all possible infinitesimal
deformations of $X_0$. Choosing $u=u_k$ and $ k^\phi_t =B_i$, (5.8)
becomes
$$
(\Theta^E)_{i \bar i \,k \bar k}=\int_{X_0} (e_{i \bar i} f_{k, \bar k}
+e_{ i \bar k} f_{k \bar i} )e^\phi,
$$
which is equivalent to formula (2.2) from \cite{Yau et al} if we use
that the $L^2$ metric on $E$ (the bundle of quadratic differentials)
is dual to the Weil-Petersson metric, see \cite{Hubbard} p 328.

\def\listing#1#2#3{{\sc #1}:\ {\it #2}, \ #3.}

\end{document}